\newcommand\etc{etc\@ifnextchar.{}{.\@}\xspace}
\newcommand\ie{i.e.\@\xspace}  
\newcommand{\suck}{\vspace{-1.0em}}
\newcommand{\ssuck}{\vspace{-1.5em}}
\newcommand{\sssuck}{\vspace{-2.0em}}
\newcommand{\inlinegraphic}[2]{
  \dimendef\grafheight=255\dimendef\grafvshift=254
  \grafheight=#1
  \grafvshift=-0.5\grafheight
  \advance\grafvshift by 0.5ex
  \raisebox{\grafvshift}{\includegraphics[height=\grafheight]{images/#2}\xspace}
}
\newcommand{\ninlinegraphic}[2][1.0]{
  \dimendef\grafheight=255\dimendef\grafvshift=254
  \setbox0 = \hbox{\scalebox{#1}{\includegraphics{images/#2}}}
  \grafheight=\the\ht0
  \grafvshift=-0.5\grafheight
  \advance\grafvshift by 0.5ex
  \raisebox{\grafvshift}{\includegraphics[height=\grafheight]{images/#2}\xspace}
}
\newcommand{\inline}[1]{
  \raisebox{0.5ex}{\;#1\;}
}
\newtheorem{theorem}{Theorem}[section]
\newtheorem{proposition}[theorem]{Proposition}
\newtheorem{lemma}[theorem]{Lemma}
\newtheorem{corollary}[theorem]{Corollary}
\theoremstyle{definition}\newtheorem{example}[theorem]{Example}
\theoremstyle{definition}
\theoremstyle{definition}\newtheorem{definition}[theorem]{Definition}
\theoremstyle{definition}
\theoremstyle{definition}\newtheorem{remark}[theorem]{Remark}
\theoremstyle{definition}
\newtheorem*{note}{Note}
\newenvironment{repthm}[3] 
{ 
  \bgroup 
  \addtocounter{#2}{-1} 
  \expandafter\def\csname the#2\endcsname{\ref{#3}} 
  \def\foo{\end{#1}} 
  \begin{#1} 
} 
{ 
  \foo 
  \egroup 
}
\newenvironment{prevtheorem}[1]
{\begin{repthm}{theorem}{theorem}{#1}}
{\end{repthm}}
\newenvironment{prevlemma}[1]
{\begin{repthm}{lemma}{theorem}{#1}}
{\end{repthm}}
\newcommand{\isomorphism}{\cong}
\newcommand{\id}[1]{\ensuremath{\mathrm{id}_{#1}}}
\newcommand{\id}[1]{\ensuremath{1_{#1}}}
\newcommand{\op}{^\mathrm{op}}
\newcommand{\catC}{\ensuremath{{\cal C}}\xspace}
\newcommand{\catRel}{
\ensuremath{\mathbf{Rel}}\xspace}
\newcommand{\zxcalculus}{\textsc{zx}-calculus\xspace}
\newcommand{\Hop}{\ensuremath{H^{\mathrm{op}}\xspace}}
\newcommand{\inlinetikzfig}[2][1.0]{
  \dimendef\grafheight=255\dimendef\grafvshift=254
  \setbox0 = \hbox{\scalebox{#1}{\tikzfig{#2}}}
  \grafheight=\the\ht0
  \grafvshift=-0.5\grafheight
  \advance\grafvshift by 0.5ex
  \raisebox{\grafvshift}{\tikzfig{#2}}
}
\newcommand{\inltf}[1]{\inlinetikzfig{#1}}
\tikzstyle{halfsize}=[x=0.5cm, y=0.5cm]
\tikzstyle{normalsize}=[]
\tikzstyle{doublesize}=[]
\tikzstyle{(null)}=[]
\tikzstyle{plain}=[]
\newcommand{\greenspider}[1]{%
\begin{tikzpicture}[quanto]
  \spider{green vertex}{spideri}{0,0}
  \node [green angle] at (spideri) {#1};
\end{tikzpicture}
}
\newcommand{\redspider}[1]{%
\begin{tikzpicture}[quanto]
  \spider{red vertex}{spideri}{0,0}
  \node [red angle] at (spideri) {#1};
\end{tikzpicture}
}
\newcommand{\hgate}{%
\begin{tikzpicture}[quanto]
  \node [boundary vertex] (a) at (-1.0,1.05) {};
  \node [hadamard vertex] (c) at (-2.25,1.05) {};
  \node [boundary vertex] (d) at (-3.525,1.05) {};
  \draw [] (a) to (c);
  \draw [] (c) to (d);
\end{tikzpicture}
}
\newcommand{\greendelta}{\inline{%
\begin{tikzpicture}[quanto]
  \node [style=boundary vertex] (0) at (0, 1) {};
  \node [style=green vertex] (1) at (0, 0) {};
  \node [style=boundary vertex] (2) at (-1, -1) {};
  \node [style=boundary vertex] (3) at (1, -1) {};
  \draw  (1) to (2);
  \draw  (1) to (3);
  \draw  (0) to (1);
\end{tikzpicture}
}}
\newcommand{\greenmu}{\inline{%
\begin{tikzpicture}[quanto]
  \node [style=boundary vertex] (0) at (0, -1) {};
  \node [style=green vertex] (1) at (0, 0) {};
  \node [style=boundary vertex] (2) at (-1, 1) {};
  \node [style=boundary vertex] (3) at (1, 1) {};
  \draw  (1) to (2);
  \draw  (1) to (3);
  \draw  (0) to (1);
\end{tikzpicture}
}}
\newcommand{\greenunit}{%
  \begin{tikzpicture}[quanto]
    \node [boundary vertex] (a) at (-2.0,1.0) {};
    \node [green vertex] (b) at (-1.0,1.0) {};
    \draw [] (b) to (a);
  \end{tikzpicture}
}
\newcommand{\greenphase}[1]{%
  \begin{tikzpicture}[quanto]
    \node [green vertex] (c) at (-2.0,1.175) {};
    \node [boundary vertex] (b) at (-3.0,1.175) {};
    \node [boundary vertex] (a) at (-1.0,1.175) {};
    \draw [] (a) to (c);
    \draw [] (c) to (b);
    \node [green angle] at (c) {$#1$};
  \end{tikzpicture}
}
\newcommand{\redphase}[1]{%
  \begin{tikzpicture}[quanto]
    \node [red vertex] (c) at (-2.0,1.175) {};
    \node [boundary vertex] (b) at (-3.0,1.175) {};
    \node [boundary vertex] (a) at (-1.0,1.175) {};
    \draw [] (a) to (c);
    \draw [] (c) to (b);
    \node [red angle] at (c) {$#1$};
  \end{tikzpicture}
}
\newcommand{\czed}{%
  \begin{tikzpicture}[quanto]
    \node [boundary vertex] (a) at (-1.0,1.0) {};
    \node [boundary vertex] (d) at (-3.0,3.0) {};
    \node [boundary vertex] (c) at (-3.0,1.0) {};
    \node [boundary vertex] (b) at (-1,3.0) {};
    \node [hadamard vertex] (g) at (-2.0,2.0) {};
    \node [green vertex] (f) at (-2.0,1.0) {};
    \node [green vertex] (e) at (-2.0,3.0) {};
    \draw [] (f) to (g);
    \draw [] (e) to (d);
    \draw [] (f) to (c);
    \draw [] (b) to (e);
    \draw [] (a) to (f);
    \draw [] (g) to (e);
  \end{tikzpicture}
}
\newcommand{\cex}{%
  \begin{tikzpicture}[quanto]
    \node [boundary vertex] (a) at (-1.0,1.0) {};
    \node [boundary vertex] (d) at (-3.0,3.0) {};
    \node [boundary vertex] (c) at (-3.0,1.0) {};
    \node [boundary vertex] (b) at (-1,3.0) {};
    \node [green vertex] (f) at (-2.0,1.0) {};
    \node [red vertex] (e) at (-2.0,3.0) {};
    \draw [] (f) to (e);
    \draw [] (e) to (d);
    \draw [] (f) to (c);
    \draw [] (b) to (e);
    \draw [] (a) to (f);
  \end{tikzpicture}
}
\newcommand{\tonc}{%
  \begin{tikzpicture}[quanto]
    \node [boundary vertex] (a) at (-1.0,1.0) {};
    \node [boundary vertex] (d) at (-3.0,3.0) {};
    \node [boundary vertex] (c) at (-3.0,1.0) {};
    \node [boundary vertex] (b) at (-1,3.0) {};
    \node [red vertex] (f) at (-2.0,1.0) {};
    \node [green vertex] (e) at (-2.0,3.0) {};
    \draw [] (f) to (e);
    \draw [] (e) to (d);
    \draw [] (f) to (c);
    \draw [] (b) to (e);
    \draw [] (a) to (f);
  \end{tikzpicture}
}
\newcommand{\Heulerdecomp}{\inline{%
\begin{tikzpicture}[quanto]
  \node [boundary vertex] (0) at (0, 1.5) {};
  \node [green vertex] (1) at (0, 1) {};
  \node [red vertex] (2) at (0, 0) {};
  \node [green vertex] (3) at (0, -1) {};
  \node [boundary vertex] (4) at (0, -1.5) {};
  \draw  (0) to (1);
  \draw  (1) to (2);
  \draw  (2) to (3);
  \draw  (4) to (3);
  \node [green angle] at (1) {$\pi/2$};
  \node [red angle] at (2) {$\pi/2$};
  \node [green angle] at (3) {$\pi/2$};
\end{tikzpicture}
}}
\newcommand{\rpp}{%
  \begin{tikzpicture}[quanto]
    \node [rpp] (c) at (-2.0,1.175) {};
    \node [boundary vertex] (b) at (-3.0,1.175) {};
    \node [boundary vertex] (a) at (-1.0,1.175) {};
    \draw [] (a) to (c);
    \draw [] (c) to (b);
  \end{tikzpicture}
}
\newcommand{\rpi}{%
  \begin{tikzpicture}[quanto]
    \node [rpi] (c) at (-2.0,1.175) {};
    \node [boundary vertex] (b) at (-3.0,1.175) {};
    \node [boundary vertex] (a) at (-1.0,1.175) {};
    \draw [] (a) to (c);
    \draw [] (c) to (b);
  \end{tikzpicture}
}
\newcommand{\rmm}{%
  \begin{tikzpicture}[quanto]
    \node [rmm] (c) at (-2.0,1.175) {};
    \node [boundary vertex] (b) at (-3.0,1.175) {};
    \node [boundary vertex] (a) at (-1.0,1.175) {};
    \draw [] (a) to (c);
    \draw [] (c) to (b);
  \end{tikzpicture}
}
\newcommand{\gpp}{%
  \begin{tikzpicture}[quanto]
    \node [gpp] (c) at (-2.0,1.175) {};
    \node [boundary vertex] (b) at (-3.0,1.175) {};
    \node [boundary vertex] (a) at (-1.0,1.175) {};
    \draw [] (a) to (c);
    \draw [] (c) to (b);
  \end{tikzpicture}
}
\newcommand{\gpi}{%
  \begin{tikzpicture}[quanto]
    \node [gpi] (c) at (-2.0,1.175) {};
    \node [boundary vertex] (b) at (-3.0,1.175) {};
    \node [boundary vertex] (a) at (-1.0,1.175) {};
    \draw [] (a) to (c);
    \draw [] (c) to (b);
  \end{tikzpicture}
}
\newcommand{\gmm}{%
  \begin{tikzpicture}[quanto]
    \node [gmm] (c) at (-2.0,1.175) {};
    \node [boundary vertex] (b) at (-3.0,1.175) {};
    \node [boundary vertex] (a) at (-1.0,1.175) {};
    \draw [] (a) to (c);
    \draw [] (c) to (b);
  \end{tikzpicture}
}
\newcommand{\SWAP}{%
  \begin{tikzpicture}[quanto,halfsize]
    \node [boundary vertex] (tl) at (1.0,1.0) {};
    \node [boundary vertex] (br) at (-2,3.0) {};
    \node [boundary vertex] (bl) at (-2,1.0) {};
    \node [boundary vertex] (tr) at (1.0,3.0) {};
    \draw [out=0,in=180] (br) to (tl) ;
    \draw [out=0,in=180] (bl) to (tr) ;
  \end{tikzpicture}
}
\newcommand{\daggerantipode}{\tikzfig{hopf-is-frob/inlantipoded}}
\newcommand{\antipodeform}{\tikzfig{hopf-is-frob/inlantipode}}
\newcommand{\antipodeiform}{\tikzfig{hopf-is-frob/inlantipodei}}
\newcommand{\rint}{\tikzfig{hopf-is-frob/rint}}
\newcommand{\gcoint}{\tikzfig{hopf-is-frob/gcoint}}
\newcommand{\antipode}{
\begin{tikzpicture}
  \node [style=antipode] at (0, 0) {};
\end{tikzpicture}
}
\newcommand{\antipoded}{
\begin{tikzpicture}[rotate =90, transform shape]
  \node [style=antipode] at (0, 0) {};
\end{tikzpicture}
}
\newcommand{\rediso}{
\begin{tikzpicture}
  \node [style=red-iso] at (0, 0) {};
\end{tikzpicture}
}
\newcommand{\redinv}{
\begin{tikzpicture}
  \begin{pgfonlayer}{nodelayer}
    \node [style=red-iso] (24) at (1, 0) {};
    \node [style=none] (25) at (1, 0) {\scriptsize -1};
  \end{pgfonlayer}
\end{tikzpicture}
}
\tikzset{every picture/.style={line width=0.75pt}} 
\tikzset{dedge/.style={postaction={decorate,decoration={markings,mark=at position 0.5 with {\arrow{>}}}}}}
\newcommand{\hopfisfrob}[1]{\inltf{hopf-is-frob/#1}}
\newcommand{\inlfropf}[1]{\inltf{hopf-is-frob/#1}}
\newcommand{\OX}{\ensuremath{\otimes}\xspace}
\newcommand{\FVect}{\ensuremath{\mathbf{FVect_k}}\xspace}
\newcommand{\FPMod}{\ensuremath{\mathbf{FPMod_R}}\xspace}
\newcommand{\Isec}{\ensuremath{\mathcal{I}}\xspace}
\tikzstyle{directed edge}=[-]
\begin{document}

\maketitle 
\begin{abstract}
  The \zxcalculus and related theories are based on so-called
  interacting Frobenius algebras, where a pair of $\dag$-special
  commutative Frobenius algebras jointly form a pair of Hopf algebras.
  In this setting we introduce a generalisation of this structure,
  \emph{Hopf-Frobenius algebras}, starting from a single Hopf algebra
  which is not necessarily commutative or cocommutative.  We provide a
  few necessary and sufficient conditions for a Hopf algebra to be a
  Hopf-Frobenius algebra, and show that every Hopf algebra in \FVect
  is a Hopf-Frobenius algebra. In addition, we show that this
  construction is unique up to an invertible scalar. Due to this fact,
  Hopf-Frobenius algebras provide two canonical notions of duality,
  and give us a ``dual'' Hopf algebra that is isomorphic to the usual
  dual Hopf algebra in a compact closed category. We use this
  isomorphism to construct a Hopf algebra isomorphic to the Drinfeld
  double, but has a much simpler presentation.
\end{abstract}


\section{Introduction}
\label{sec:introduction}

In the monoidal categories approach to quantum theory
\cite{AbrCoe:CatSemQuant:2004,Coecke2017Picturing-Quant} Hopf algebras
\cite{Sweedler1969Hopf-Algebras} have a central role in the
formulation of complementary observables \cite{Coecke:2009aa}.  In
this setting, a quantum observable is represented as special
commutative \dag-Frobenius algebra; a pair of such observables are
called \emph{strongly complementary} if the algebra part of the first
and the coalgebra part of the second jointly form a Hopf algebra.  In
abstract form, this combination of structures has been studied under
the name ``interacting Frobenius algebras''
\cite{Duncan2016Interacting-Fro} where it is shown that relatively
weak commutation rules between the two Frobenius algebras produce the
Hopf algebra structure.  From a different starting point Bonchi et al
\cite{Bonchi2014aJournal} showed that a distributive law between two
Hopf algebras yields a pair of Frobenius structures, an approach which
has been generalised to provide a model of Petri nets
\cite{Bonchi:2019:DAL:3302515.3290338}. Given the similarity of the
two structures it is appropriate to consider both as exemplars of a
common family of \emph{Hopf-Frobenius algebras}.

In the above settings, the algebras considered are both commutative
and cocommutative.  However more general Hopf algebras, perhaps not
even symmetric, are a ubiquitous structure in mathematical physics,
finding applications in gauge theory \cite{Meusburger2017},
topological quantum field theory \cite{balsam2012kitaev} and
topological quantum computing \cite{buerschaper2013hierarchy}.  In
this paper we take the first steps towards generalising the concept of
Hopf-Frobenius algebra to the non-commutative case, and opening the
door to applications of categorical quantum theory in other areas of
physics.

Loosely speaking, a Hopf-Frobenius algebra consists of two monoids and
two comonoids such that one way of pairing a monoid with a comonoid
gives two Frobenius algebras, and the other pairing yields two Hopf
algebras, with the additional condition that antipodes are constructed
from the Frobenius forms.  This schema is illustrated in
Figure~\ref{fig:hopf-frob-alg}.  In
Section~\ref{sec:when-hopf-algebras} we give the precise definition of
Hopf-Frobenius algebras and state the necessary and sufficient
conditions to extend a Hopf algebra to a Hopf-Frobenius algebra in an
arbitrary symmetric monoidal category.  It was previously known that
in \FVect, the category of finite dimensional vector spaces, every
Hopf algebra carries a Frobenius algebra on both its monoid
\cite{Larson1969An-Associative-} and its comonoid
\cite{Doi2000Bi-Frobenius-al,KOPPINEN1996256}; in fact every Hopf
algebra in \FVect is Hopf-Frobenius.  In Section~\ref{sec:examples} we
briefly present some examples which are not the usual abelian group
algebras.  In Section~\ref{sec:drinfeld-double} we show the structure
of a Hopf-Frobenius algebra can be used to give a simpler version of
the Drinfeld double construction.

\paragraph*{Acknowledgements}
The authors wish to thank Dr Gabriella B\"ohm (Wigner Research Centre
for Physics) for her very kind email, and all of the help and input
that she gave us. The authors also wish to thank the anonymous
reviewer for their many useful and insightful comments. Joseph Collins
is supported by a Carnegie Trust PhD Scholarship.

 \begin{figure}[t!]
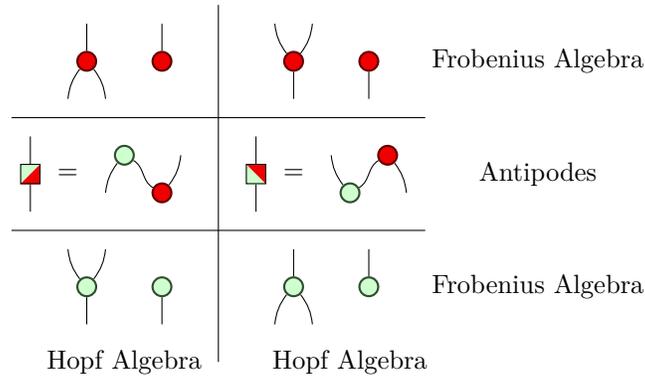

    \centering
    \[
    \hopfisfrob{the-lattice}
    \]
    \suck
    \caption{The elements of a Hopf-Frobenius algebra}    
    \label{fig:hopf-frob-alg}
    \suck
\end{figure}

\section{Preliminaries}
\label{sec:preliminaries}
\label{sec:categ-with-duals}
\label{sec:frobenius-algebras}
\label{sec:hopf-algebras}

We assume that the reader is familiar with strict symmetric monoidal
categories and their diagrammatic notation; see Selinger
\cite{Selinger:2009aa} for a thorough treatment.  We make the
convention that diagrams are read from top to bottom.  When we work
with the dual of an object, we will opt to omit the object names from
the wires except where doing so would create ambiguity.  Instead, we
will assign an orientation to the wires: downwards for the original
object, upwards for its dual.



\begin{definition}\label{def:duals}
  In a monoidal category \catC with objects $A$ and $B$, $B$ is
  \emph{left dual} to $A$ if there exists morphisms $d:I\to A\otimes B$
  and $e: B \otimes A \to I$ such that
  \[
  \inlinetikzfig{left-snake} = \ \inltf{idA}
  \qquad \text{ and } \qquad 
  \inltf{right-snake} = \ \inltf{idB}
  \]
  In this circumstance $A$ is \emph{right dual} to $B$.  Note that if
  \catC is symmetric then left duals and right duals coincide.
\end{definition}

The morphisms $d$ and $e$ are usually called the unit and counit; for
reasons which will become obvious shortly we avoid that terminology
and refer to them as the \emph{cap} and the \emph{cup}.  Note that if
an object has a dual it is unique up to isomorphism (see
Lemma~\ref{prop:duals-are-unique}). 

\begin{definition}\label{def:comclcat}
  A \emph{compact closed category} \cite{KelLap:comcl:1980} is a
  symmetric monoidal category where every object $A$ has an assigned
  dual $(A^*,d_A,e_A)$.  In the graphical notation we depict the cup
  and cap in the obvious way: \suck
  \[
  d_A := \ \inltf{cap} \qquad\qquad e_A := \ \inltf{cup}
  \]
\end{definition}

\begin{proposition}[\cite{KelLap:comcl:1980}] \label{prop:dual-functor}
  Let \catC be a compact closed category.  By defining $f^* : B^* \to
  A^*$ as
  \[
  \inltf{f-dual-map} \ := \ 
  \inltf{f-dual-defn}
  \]
  the assignment of duals $A \mapsto A^*$ extends uniquely to a strong
  monoidal functor $(\cdot)^* : \catC\op \to \catC$, with natural
  isomorphisms $(A \otimes B)^* \isomorphism B^* \otimes A^*$, $A^{**}
  \isomorphism A$, and $I^* \isomorphism I$ and, further, $d$ and $e$
  are natural transformations.
\end{proposition}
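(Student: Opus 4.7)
The plan is to build the functor by hand from the definition of $f^*$ and check each required property diagrammatically, exploiting the snake equations at every turn.

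First, I would verify that $(\cdot)^*$ is a functor $\catC\op \to \catC$. For identities, note that $(\id{A})^*$ unfolds to a cap, an identity wire in the middle, and a cup, which collapses to $\id{A^*}$ by one snake equation. For composition, given $f : A \to B$ and $g : B \to C$, I would draw $f^* \circ g^*$ as two stacked ``yanked'' diagrams, then use a snake equation on the middle $B$-loop to fuse the two copies of $f$ and $g$ into a single $g \circ f$ wedged between one cap and one cup; this is exactly $(g \circ f)^*$.

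Next, for the monoidal structure I would produce the isomorphism $\varphi_{A,B} : (A \otimes B)^* \to B^* \otimes A^*$ directly. The unit isomorphism $I^* \cong I$ follows because $(I, \id{I}, \id{I})$ already satisfies the duality equations for $I$, and duals are unique up to unique iso (Lemma~\ref{prop:duals-are-unique}). For $\varphi_{A,B}$, I would exhibit a cap $I \to (A \otimes B) \otimes (B^* \otimes A^*)$ built by nesting $d_B$ inside $d_A$, and a matching cup built by nesting $e_A$ inside $e_B$; checking the snake equations amounts to two applications of the snakes for $A$ and $B$ separately. Uniqueness of duals then forces $(A \otimes B)^* \cong B^* \otimes A^*$, and in the same way, since $(A, e_A \circ \sigma, \sigma \circ d_A)$ makes $A$ into a dual of $A^*$, we get $A^{**} \cong A$. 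Coherence of these isomorphisms (the strong monoidal hexagons and triangles) then follows by drawing each side and applying snakes to reduce both to the same normal form; this is routine but lengthy.

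For naturality of $d$ and $e$, I would take $f : A \to B$ and show $(f \otimes \id{B^*}) \circ d_A = (\id{A} \otimes f^*{}^{\text{op}}) \circ d_B$ (interpreted in the appropriate sense for the functor $(\cdot)^*$), by expanding $f^*$ and applying the right snake. The analogous calculation works for $e$.

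Uniqueness of the extension is the conceptually delicate part, though it is not computationally hard: any other strong monoidal functor $F : \catC\op \to \catC$ sending $A$ to an assigned dual and making $d, e$ natural must agree with $(\cdot)^*$ on objects up to unique iso (by uniqueness of duals), and on morphisms it is pinned down by the requirement that $F(f)$ satisfy the universal property encoded by naturality of $d$ and $e$; unwinding this gives exactly the formula defining $f^*$. The main obstacle I anticipate is not any single calculation but the bookkeeping of coherence for the monoidal structure --- making sure that $\varphi_{A,B}$, the associator, and the unitors fit together so that $(\cdot)^*$ is genuinely strong monoidal rather than merely sending tensor products to isomorphic objects.
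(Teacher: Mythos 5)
The paper offers no proof of this proposition --- it is quoted directly from Kelly and Laplaza \cite{KelLap:comcl:1980} --- so there is nothing internal to compare against; your sketch is the standard argument from that source and is essentially correct in all its parts (functoriality via snakes, the nested cups/caps for $(A\otimes B)^*$, uniqueness of duals for $I^*\cong I$ and $A^{**}\cong A$, and naturality forcing the formula for $f^*$). The only concrete slip is in the naturality square for $d$: as written, $(f\otimes \id{B^*})\circ d_A$ does not typecheck, since $d_A$ lands in $A\otimes A^*$; the intended equation is $(f\otimes \id{A^*})\circ d_A = (\id{B}\otimes f^*)\circ d_B$, which your expand-and-snake computation does establish.
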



\begin{remark}
  Note that if $A$ is its own dual, a further collection of coherence
  equations must apply;  see Selinger \cite{Selinger:2010:aa}.
\end{remark}


The main foci of this work -- Frobenius and Hopf algebras -- combine
the structure of a monoid and a comonoid on the same
object.  See Appendix~\ref{sec:monoids-comonoids} for basic definitions.

\begin{definition}\label{def:frobenius-alg}
  A \emph{Frobenius algebra} in a symmetric monoidal category \catC
  consists of a monoid and a comonoid on the same object, obeying the
  Frobenius law, shown below on the left:
  \[
  \inltf{green-frob-law-left} = 
  \inltf{green-frob-law-middle} =
  \inltf{green-frob-law-right}
  \qquad\qquad \qquad 
  \inltf{green-frob-special} = \inltf{identity_1}
  \]
  A Frobenius algebra is called \emph{special} or \emph{separable}
  when it obeys the equation above right, and \emph{quasi-special}
  when it obeys the special equation up to an invertible scalar
  factor.  A Frobenius algebra is commutative when its monoid is, and
  cocommutative when its comonoid is.
\end{definition}

\begin{lemma}\label{lem:frob-self-dual}
  Every Frobenius algebra induces a cup and a cap which make the
  object self-dual.
  \begin{proof}
    Given the Frobenius algebra $(\gmult,\gunit,\gcomult,\gcounit)$
    define the cup and cap as shown below.
    \[
    d := \inltf{green-cap}  = \inltf{green-cap-def}
    \qquad    \qquad    \qquad
    e := \inltf{green-cup}  = \inltf{green-cup-def}
    \]
    From here the snake equation follows directly.
  \end{proof}
\end{lemma}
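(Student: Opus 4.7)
The plan is to carry out the standard construction and verify the snake equations purely by diagrammatic rewriting. First I would make the definitions from the statement explicit: take
\[
d \;=\; \gcomult \circ \gunit \,:\, I \to A\otimes A,
\qquad
e \;=\; \gcounit \circ \gmult \,:\, A\otimes A \to I,
\]
so that $d$ is a unit-then-comultiply ``cap'' and $e$ is a multiply-then-counit ``cup''. These are exactly the pictures labelled \emph{green-cap-def} and \emph{green-cup-def} in the statement.

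Next I would verify the left snake equation $(\id{A}\otimes e)\circ(d\otimes\id{A}) = \id{A}$. Substituting the definitions, the composite is a diagram in which a unit $\gunit$ sits at the top-left wire, its output splits by $\gcomult$, one branch meets an incoming free wire at $\gmult$, and the result is closed off by $\gcounit$. The key move is to apply the Frobenius law to the adjacent $\gcomult$/$\gmult$ pair: this is precisely the rewrite shown in Definition~\ref{def:frobenius-alg}, which straightens the ``Z'' shape so that the comultiplication appears after the multiplication on the single strand. After that rewrite, $\gmult \circ (\gunit \otimes \id{A})$ collapses to $\id{A}$ by the unit law of the monoid, and $(\id{A}\otimes\gcounit)\circ\gcomult$ collapses to $\id{A}$ by the counit law of the comonoid, leaving a single wire. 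The right snake equation follows by the mirror-image argument using the other half of the Frobenius law.

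There is essentially no obstacle here: the proof is a three-step diagrammatic calculation (Frobenius law, then unit law, then counit law), and because the definitions of $d$ and $e$ are symmetric with respect to the left/right reflection of the diagram, a single picture-proof handles both snake equations simultaneously. No use of commutativity, cocommutativity, specialness, or any property of the ambient category beyond symmetric monoidality is needed.
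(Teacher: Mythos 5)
Your proposal is correct and matches the paper's construction exactly: the paper defines $d$ and $e$ as unit-then-comultiply and multiply-then-counit and simply asserts that the snake equations ``follow directly,'' which is precisely the three-step calculation (Frobenius law, then unit law, then counit law) you spell out. No discrepancy to report.
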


Definition \ref{def:frobenius-alg}, due to Carboni and Walters
\cite{Carbonia:1987aa}, has a pleasing symmetry between the monoid and
comonoid parts.  However, an older equivalent definition will be
useful in later sections\footnote{See Fauser's survey
\cite{Bertfried:2013aa} for several equivalent definitions.}.

\begin{definition}\label{def:frob-alt}
  A \emph{Frobenius algebra} in a symmetric monoidal category \catC
  consists of a monoid $(F,\gmult,\gunit)$ and a \emph{Frobenius form}
  $\beta : F \otimes F \to I$, which admits an inverse,
  $\bar\beta:I\to F\otimes F$, satisfying:
  \suck
  \[
  \inltf{green-alt-frob-assoc-left} = 
  \inltf{green-alt-frob-assoc-right}
  \qquad  \qquad  \qquad
  \inltf{alt-frob-nondegen-left} =
  \inltf{identity_1} = 
  \inltf{alt-frob-nondegen-right}
  \]
  \suck
\end{definition}
To see that Definition~\ref{def:frobenius-alg} implies this definition
it suffices to take the cup and cap defined above as $\beta$ and
$\bar\beta$.  For the converse, we dualise \gmult with $\beta$ to get
a comonoid. For a proof of how this comonoid fulfills the Frobenius
law, see Kock \cite{Kock:TQFT:2003}

Frobenius forms are far from unique: there is one for each invertible
element of the monoid (see Appendix \ref{sec:more-frobenius-algebras}).


Special Frobenius algebras can be understood as arising from a
distribution law of comonoids over monoids \cite{Lack:2004sf}.  In the
other direction, distributing monoids over comonoids yields
bialgebras.

\noindent
\begin{note}
  Unlike the preceding section, in our discussion of bialgebras and
  Hopf algebras, we will use different colours for the monoid and
  comonoid parts of the structure.
\end{note}

\begin{definition}\label{def:bialgebra}
  A \emph{bialgebra} in symmetric monoidal category \catC consists of
  a monoid and a comonoid on the same object, which jointly obey the
  \emph{copy}, \emph{cocopy}, \emph{bialgebra}, and \emph{scalar} laws
  depicted below.
  \ssuck
  \[
  \inltf{copy-lhs} \! = \ \inltf{copy-rhs}  
  \qquad  \qquad 
  \inltf{cocopy-lhs} \! = \ \inltf{cocopy-rhs}  
  \qquad  \qquad 
  \inltf{bialg-lhs} =  \inltf{bialg-rhs}
  \qquad\qquad
  \inltf{bialg-bone} = \ \ \inltf{empty}
  \]
  \suck

  \noindent
  Note that the dashed box above represents an empty diagram.  We may
  equivalently define a bialgebra as a monoid and a comonoid such that
  the comonoid is a monoid homomorphism.  A \emph{bialgebra morphism}
  is an morphism of the object which is both a monoid homomorphism and
  a comonoid homomorphism.
\end{definition}

\begin{remark}
  Some works, notably on the \zxcalculus
  \cite{Coecke:2009aa,Backens:2015aa,Jeandel2017A-Complete-Axio} and
  related theories \cite{Duncan2016Interacting-Fro}, the last axiom is
  dropped and the other equations modified by a scalar factor, to give
  a \emph{scaled bialgebra}.  Here we use the standard definition:
  the Frobenius algebras we construct will not be special.
\end{remark}


\begin{definition}\label{def:hopf-alg}
  A \emph{Hopf algebra} consists of a bialgebra
  $(H,\gmult,\gunit,\rcomult,\rcounit)$ and an endomorphism $s:H\to H$
  called the \emph{antipode} which satisfies the \emph{Hopf law}:
  \[
  s \ := \  \inltf{antipode} 
  \qquad  \qquad  \qquad
  \inltf{hopf-lhs-ii} \!= \ \inltf{hopf-rhs}  = \  \inltf{hopf-lhs} 
  \]
\end{definition}

Where unambiguous, we abuse notation slightly and use $H$ to refer the
whole Hopf algebra.  Following Street \cite{street2007quantum}, we can
define another Hopf algebra $\Hop\,$ on the same object,
having the same unit and counit, but with the arguments of the
multiplication and comultiplication swapped:
\[
\inltf{green-mult} \mapsto \ \inltf{green-swap-mult}
\qquad\qquad\qquad
\inltf{red-comult} \mapsto \ \inltf{red-comult-swap}
\]
Replacing only the comultiplication as above yields a bialgebra $H^\sigma$ which
is not necessarily Hopf.  We quote the following basic properties from
Street \cite{street2007quantum}.
\begin{proposition}\label{prop:antipode-props} For a Hopf algebra $H$:
  \begin{enumerate}
  \item The antipode $s$ is unique.
  \item $s:\Hop \to H$ is a bialgebra homomorphism, \ie
    \suck
    \[
    \inltf{hopf-antihom-i}
    = \ \inltf{hopf-antihom-ii}
    \qquad\qquad\qquad
    \inltf{hopf-antihom-iii}
    = \ \inltf{hopf-antihom-iv}
    \]
    \sssuck
  \item $H^\sigma$ is a Hopf algebra if and only if $s$ is invertible, in
    which case the antipode of $H^\sigma$ is $s^{-1}$.
  \item If $H$ is commutative or cocommutative then $s\circ s = \id{H}$.
  \end{enumerate}
\end{proposition}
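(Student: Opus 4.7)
The plan is to work throughout in the convolution monoid on $\mathrm{End}(H)$, defined by $f \star g := \mu \circ (f \otimes g) \circ \Delta$ with unit $\eta \circ \varepsilon$, where I write $\mu,\eta,\Delta,\varepsilon$ for the multiplication, unit, comultiplication and counit of $H$. The Hopf law says precisely that $s$ is a two-sided inverse of $\id{H}$ in this monoid. Part (1) is then immediate: inverses in a monoid are unique.

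For (2), I would play the same game one level up in $\mathrm{Hom}(H \otimes H, H)$, which also carries a convolution-monoid structure (using $\mu$ for the target multiplication and $\Delta \otimes \Delta$, suitably braided, for the source comultiplication). I would show that both $s \circ \mu$ and $\mu \circ \sigma \circ (s \otimes s)$ are convolution inverses of $\mu$ itself; uniqueness then forces them to agree, giving the anti-multiplicativity. The first identification is a direct substitution into the Hopf law; the second uses the bialgebra axiom to expand the convolution and then collapse via Hopf. Unit preservation $s \circ \eta = \eta$ follows from applying the Hopf law at $\eta$. The anti-coalgebra statement is dual, carried out in $\mathrm{Hom}(H, H \otimes H)$.

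For (3), if $s$ is invertible, I would verify the Hopf law for $H^\sigma$ with candidate antipode $s^{-1}$ by pre-composing with $s$ and using (2) to rewrite $s \circ \mu$ as $\mu \circ \sigma \circ (s \otimes s)$; two of the three swaps then cancel, reducing the target equation to the original Hopf law of $H$. Conversely, if $H^\sigma$ is Hopf with some antipode $s'$, a convolution-monoid computation shows that $s'$ must be a two-sided inverse to $s$ in $\mathrm{End}(H)$, in particular forcing $s$ to be invertible with $s^{-1}=s'$.

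For (4), I would apply $s$ to the Hopf law $\mu \circ (\id{H} \otimes s) \circ \Delta = \eta \circ \varepsilon$ and use (2) to pull $s$ through $\mu$, obtaining $\mu \circ (s^2 \otimes s) \circ \sigma \circ \Delta = \eta \circ \varepsilon$. If $H$ is cocommutative then $\sigma \circ \Delta = \Delta$, so $s^2 \star s = \eta \circ \varepsilon$, exhibiting $s^2$ as a convolution inverse to $s$; since $\id{H}$ is also such an inverse by the Hopf law, uniqueness in the convolution monoid gives $s^2 = \id{H}$. The commutative case is symmetric, using $\mu \circ \sigma = \mu$ to rewrite the same identity as $s \star s^2 = \eta \circ \varepsilon$. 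The main obstacle is the anti-homomorphism claim in (2): identifying the correct candidate on the ``other side'' and tracking braids carefully is the only step doing real combinatorial work — everything else falls out formally from uniqueness of convolution inverses once (2) is in place.
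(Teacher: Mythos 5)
Your argument is correct, but there is no in-paper proof to compare it against: the authors explicitly \emph{quote} this proposition from Street's \emph{Quantum Groups} and give no proof of their own. What you have written is the standard convolution-monoid argument, and it goes through unchanged in an arbitrary symmetric monoidal category, which is exactly the generality the paper needs. Two small points of precision are worth making explicit. In (2), the clean bookkeeping is to show that $s\circ\mu$ is a \emph{left} convolution inverse of $\mu$ in $\mathrm{Hom}(H\otimes H,H)$ and that $\mu\circ\sigma\circ(s\otimes s)$ is a \emph{right} one (with the coalgebra structure on $H\otimes H$ given by $(\mathrm{id}\otimes\sigma\otimes\mathrm{id})\circ(\Delta\otimes\Delta)$, as you note); an element of a monoid possessing both a left and a right inverse has them equal, which is all the ``uniqueness'' you need. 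In the converse direction of (3), be clear that the convolution monoid is only the vehicle: the computation shows $s\star(s\circ s')=\eta\circ\varepsilon=s\star\mathrm{id}_H$ (using the anti-homomorphism property, $s\circ\eta=\eta$, and the Hopf law of $H^\sigma$), and one cancels $s$ — which is convolution-invertible with inverse $\mathrm{id}_H$ — to conclude $s\circ s'=\mathrm{id}_H$, the \emph{compositional} identity; the other composite is symmetric. With those glosses your proof is complete and is essentially the argument found in the cited source.
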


\begin{definition}\label{def:dual-hopf-alg}
  Let $(H,\gmult,\gunit,\rcomult,\rcounit,\antipode)$ be a Hopf
  algebra, and suppose that the object $H$ has a left dual $H^*$.  We
  define the \emph{dual Hopf algebra}
  $(H^*,\rcomult\!{}^*,\rcounit{}^*,\gmult\!{}^*,\gunit{}^*,\antipode{}^*)$ as :
  \[
  \rcomult\!{}^*:= \inltf{red-dual-mult}  \qquad
  \rcounit{}^*:= \inltf{red-dual-unit}  \qquad
  \gmult\!{}^*:= \inltf{green-dual-comult} \qquad
  \gunit{}^*:= \inltf{green-dual-counit} \qquad
  \antipode{}^* := \inltf{antipode-dual}
  \]
\end{definition}

\noindent
It's straightforward to prove that $H^*$ is indeed a Hopf algebra using
the equations of Def~\ref{def:duals}
In later sections it will be helpful to consider duals with
respect to different cups and caps, in which case we will vary
notation accordingly but the same construction is used in all cases.






\section{Hopf-Frobenius Algebras}
\label{sec:when-hopf-algebras}

We now arrive at the main subject of this paper, Hopf-Frobenius
algebras in an arbitrary symmetric monoidal category \catC.  These
algebras generalise interacting Frobenius algebras
\cite{Coecke:2009aa,Duncan2016Interacting-Fro}, and share the same
gross structure.  It will be helpful to introduce a weaker notion
first.

\begin{definition}\label{def:pre-HF}
  A \emph{pre-Hopf-Frobenius algebra} or \emph{pre-HF algebra}
  consists of an object $H$ bearing a green monoid $(\gmult,\gunit)$,
  a green comonoid $(\gcomult,\gcounit)$, a red monoid
  $(\rmult,\runit)$, a red comonoid $(\rcomult,\rcounit)$ and an
  endomorphism $\antipode$ such that
  $(\gmult,\gunit,\gcomult,\gcounit)$ and
  $(\rmult,\runit,\rcomult,\rcounit)$ are Frobenius algebras, and
  $(\gmult,\gunit,\rcomult,\rcounit,\antipode)$ is a Hopf algebra.
\end{definition}


\begin{definition}\label{def:HF-algebra}
  A \emph{Hopf-Frobenius algebra}, or \emph{HF algebra}, is a
  pre-Hopf-Frobenius algebra where $\antipode$ satisfies the left
  equation below,
  \[
  \hopfisfrob{antipode-form-2}\,
  \]
  and with $\antipoded$ defined as in the right equation above, 
  $(\rmult,\runit,\gcomult,\gcounit,\antipoded\;)$ is a Hopf algebra.
\end{definition}

We refer to the four algebras that make up an HF algebra by the
colour\footnote{If you are reading this document in monochrome
  \emph{green} will appear as light grey and \emph{red} as dark grey.}
of their \emph{multiplication}, so that
$(\gmult,\gunit,\rcomult,\rcounit,\antipode\;)$ is the \emph{green}
Hopf algebra, $(\rmult,\runit,\rcomult,\rcounit)$ is the \emph{red}
Frobenius algebra, \etc


We now move on to the main topic of the section: under what conditions
does a Hopf algebra extend to a Hopf-Frobenius algebra?  Henceforward,
unless otherwise stated, \catC will denote a symmetric monoidal
category, and $H$ will denote a Hopf algebra
$(H,\gmult,\gunit,\rcomult,\rcounit,\antipode)$ in \catC.  Omitted
proofs are found in Appendix \ref{sec:proofs-omitted-from}.

A key concept is that of an integral. Pareigis \cite{PAREIGIS1971588}
proved\footnote{ This is a generalisation of earlier work by Larson
  and Sweedler \cite{Larson1969An-Associative-} showing that the space
  of integrals in \FVect is always isomorphic to $k$.}  that in
\FPMod, the category of finitely generated projective modules over a
commutative ring, a Hopf algebra has Frobenius structure when its
space of integrals is isomorphic to the ring.  More generally,
Takeuchi \cite{takeuchi1999finite} and Bespalov et
al. \cite{bespalov2000integrals} gave conditions for the space of
integrals in certain braided monoidal categories to be invertible.

\begin{definition}
  A \emph{left (co)integral} on $H$ is a copoint \gcoint$:H\rightarrow
  I$  (resp. a point \rint$:I \rightarrow H$), satisfying the equations:
\[
\tikzfig{hopf-is-frob/inteq}
\]
A \emph{right (co)integral} is defined similarly.
\end{definition}

\begin{definition}
  An \emph{integral Hopf algebra} $(H,\rint,\gcoint)$ is a Hopf
  algebra $H$ equipped with a choice of right integral $\gcoint$, and
  left cointegral $\rint$, such that
  $\gcoint \circ \rint = \textrm{id}_I$.
\end{definition}


\begin{lemma}
\label{theorem:invertible-antipode}
  Let $(H, \rint, \gcoint)$ be an integral Hopf algebra. Then the
  following map is the inverse of the antipode.
\[
\tikzfig{hopf-is-frob/antipode-inverse}
\]
\end{lemma}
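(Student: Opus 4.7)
The plan is to verify directly that the displayed map is a two-sided inverse to $\antipode$ by purely diagrammatic manipulation, mimicking the classical proof in $\FVect$ but carried out in an arbitrary symmetric monoidal category. The only ingredients available are the Hopf law, the bialgebra equations, the defining equations of a right integral $\gcoint$ and a left cointegral $\rint$, and the normalisation $\gcoint \circ \rint = \id_I$, so the proof must reduce to a sequence of rewrites that use exactly these.

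First I would form the composite $\antipode \circ s^{-1}$, where $s^{-1}$ denotes the candidate map. By construction, the diagram contains a pattern of the form $\gmult \circ (\antipode \otimes \id) \circ \rcomult$ (or its mirror), obtained by bringing one of the two copies of $\antipode$ next to the multiplication/comultiplication that already appears in the candidate. The Hopf law then collapses this pattern to $\gunit \circ \rcounit$, breaking the diagram into two disconnected components joined only through the integral and cointegral.

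Next I would use the defining equation of the right integral $\gcoint$ to absorb the $\rcounit$ that has been created, converting the fragment involving $\gcoint$, $\rcomult$, and $\gmult$ into a simple tensor factor of $\gcoint$; symmetrically, the cointegral equation for $\rint$ handles the fragment involving $\gunit$. What remains on the wire is precisely $\rint$ followed by $\gcoint$, and the normalisation $\gcoint \circ \rint = \id_I$ finishes this direction. For the opposite composite $s^{-1} \circ \antipode$, the argument is essentially the same, but one has to use the other half of the Hopf law and the mirror versions of the integral/cointegral equations; here it may be convenient to invoke Proposition~\ref{prop:antipode-props}(2), namely that $\antipode$ is a bialgebra homomorphism $\Hop \to H$, to slide $\antipode$ past a multiplication and expose the correct pattern.

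The main obstacle is combinatorial rather than conceptual: one must choose which copy of $\antipode$ in the composite to merge with which $\gmult$/$\rcomult$ pair, and which sidedness of the integral equation to apply, so that the remaining wires actually line up as $\gcoint \circ \rint$ rather than as some other combination. Everything else is forced by the axioms, so once the bookkeeping is right the calculation reduces to three rewrites (Hopf law, integral equation, cointegral equation) plus the normalisation.
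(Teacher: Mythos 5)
Your overall strategy---directly verifying that both composites of the candidate map with $\antipode$ are the identity, using only the Hopf law, the bialgebra axioms, the (co)integral equations and the normalisation $\gcoint\circ\rint=\id{I}$---coincides with the computation the paper carries out when it ``replicates the proof.'' You do, however, miss the paper's primary and shorter argument: one first checks that the candidate satisfies the Hopf law for the bialgebra $H^\sigma$ (the comultiplication precomposed with the symmetry), and then Proposition~\ref{prop:antipode-props}(3) immediately identifies the antipode of $H^\sigma$ as $\antipode^{-1}$, so only one diagrammatic verification is really needed.

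More importantly, your description of the key rewrite does not work as stated. You claim the composite ``contains a pattern of the form $\gmult\circ(\antipode\otimes\id{H})\circ\rcomult$'' which the Hopf law collapses. It does not: in the candidate map the comultiplication is applied to the cointegral $\rint$, and the multiplication joins \emph{one leg of that comultiplication} to the (antipoded) input wire. The two inputs of the multiplication therefore do not arise from comultiplying a single wire, so there is no Hopf-law redex to fire at that point. To create one you must first introduce $\rcounit$ on the input wire via the counit axiom, expand it using the Hopf law in the \emph{creating} direction, and then apply the bialgebra law to regroup the resulting multiplication with the comultiplied cointegral so that the left-cointegral equation can absorb it; this Larson--Sweedler-style redistribution is the real content of the computation (it is essentially what Lemma~\ref{lemma:alternate-bialgebra} packages). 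Only after that step do the integral equations and the normalisation finish the argument. As written, your three-rewrite plan would stall at its first move.
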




The statement of the above Lemma bares some similarities with
Definition \ref{def:duals}. In what follows, we will be generalising
this definition to capture the situation that arises with integral
Hopf algebras.
  \begin{definition}\label{def:half-dual}
  Let $A$ and $B$ be objects in a symmetric monoidal category \catC. A
  is a \emph{right half dual of $B$} if there exists morphisms
  $\hopfisfrob{triangle}:I\to A\otimes B$ and
  $\hopfisfrob{cotriangle}: B \otimes A \to I$ which satisfy the
  following equation
   \[
   \inlfropf{left-snake} = \ \inltf{idA}
   \]
  In this circumstance, $B$ is a \emph{left half dual of $A$}  
\end{definition}
  
Half duals are a strict generalisation of duals in the sense of
Definition~\ref{def:duals}.  Unlike true duals, an object
may have non-isomorphic half duals.  For example, if $B$ is left dual
to $A$, with a section $m:B\hookrightarrow C$ for some retraction
$m':C\twoheadrightarrow B$, then $C$ is a left half dual of
$A$. Further, any integral Hopf algebra $(H,\rint,\gcoint)$ makes $H$
left half dual to itself as follows.

\begin{definition}
 Let $(H, \rint, \gcoint)$ be an integral Hopf
  algebra, and define 
  \[
  \beta := \inltf{hopf-is-frob/cup1-def}
  \qquad\qquad
  \gamma := \inltf{hopf-is-frob/cap1-def}
  \qquad\qquad
  \beta' := \inltf{hopf-is-frob/cup2-def}
  \qquad\qquad
  \gamma' := \inltf{hopf-is-frob/cap2-def}
  \]
  With these definitions, $\gamma$ and $\beta$ make $H$ half dual to
  itself, and $\gamma'$ and $\beta'$ make $H$ half dual to itself but
  in a different way. We say that $H$ is \emph{nondegenerate} when
  $\gamma$ and $\beta$ are a cap and a cup respectively,
  (c.f. Definition \ref{def:comclcat}), making $H$ fully dual to itself. Furthermore,
  in this situation, $\gamma'$ and $\beta'$ are also a cup and a cap,
  giving a different self-dual structure to $H$.
\end{definition}

  \begin{lemma}\label{lem:equivalent-to-nondegenerate}
    Let $(H, \rint, \gcoint)$ be an integral Hopf algebra. $H$ is
    nondegenerate if and only if 
    \[
    \inltf{hopf-is-frob/equivalent-to-nondeg}\]
    
  \end{lemma}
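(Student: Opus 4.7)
The plan is to use the half-dual structure as a starting point. By Definition \ref{def:half-dual}, the pair $(\gamma, \beta)$ automatically satisfies one of the two snake equations, and similarly for $(\gamma', \beta')$. Consequently, $H$ being nondegenerate is equivalent to asking that the remaining snake equation (the one not guaranteed by the half-dual structure) holds. Our task reduces to showing that this missing snake equation is equivalent to the stated identity.

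First I would expand the missing snake equation by substituting the explicit definitions $\beta = \gcoint \circ \gmult$ and $\gamma = \rcomult \circ \rint$ from the preceding definition. This yields a string diagram built from $\gmult$, $\rcomult$, $\rint$, and $\gcoint$. I would then simplify it using the following ingredients, in roughly this order: the defining equation of the right integral $\gcoint$, which lets a multiplication under $\gcoint$ be absorbed into $\gcoint$ itself; the defining equation of the left cointegral $\rint$, which plays the dual role for $\rcomult$; the bialgebra law relating $\gmult$ with $\rcomult$; and, where $\antipode$ appears, the Hopf law together with Lemma \ref{theorem:invertible-antipode} to control the antipode.

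For the ``only if'' direction, assuming nondegeneracy I would pre- and post-compose the snake equation with suitable morphisms (typically $\rint$ on one leg and $\gcoint$ on the other) to collapse the cups and caps to their defining integrals, extracting the stated equation. For the ``if'' direction, I would assume the stated equation and rebuild the missing snake by inserting the normalisation $\gcoint \circ \rint = \id_I$ at a convenient wire, then rewriting using the integral equations in reverse to reintroduce $\beta$ and $\gamma$. The case of $(\gamma',\beta')$ follows by the same argument applied to $\Hop$, using that $\antipode$ is invertible by Lemma \ref{theorem:invertible-antipode}, so a single equivalence suffices.

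The main obstacle is the diagrammatic bookkeeping: the snake equation is a global statement about the cup and cap, whereas the stated equation is local in the integrals, so the equivalence must be extracted by finding exactly the right contraction to make. The delicate point will be keeping track of where the antipode appears and using Proposition \ref{prop:antipode-props}(2) to move it past the multiplication or comultiplication when needed. Once this rewriting strategy is fixed, both implications should follow by a symmetric sequence of applications of the integral axioms and the bialgebra law.
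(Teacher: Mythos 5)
Your plan matches the paper's proof: the paper likewise identifies nondegeneracy with the single snake identity not already supplied by the half-dual structure, obtains the stated equation from it in the forward direction by contracting with the integrals, and for the converse combines the integral axioms with the bialgebra and Hopf laws exactly as you list. The one refinement worth noting is that the paper's converse does not proceed by ``inserting $\gcoint\circ\rint=\mathrm{id}_I$ and reversing'' --- it first re-expresses the unit $\gunit$ using the hypothesis and then applies a derived identity (Lemma~\ref{lemma:alternate-bialgebra}, the bialgebra law composed with the Hopf law) together with the cointegral equation, which is the precise form of the bookkeeping you anticipate.
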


\begin{lemma}\label{lem:nondegenerate-implies-preHF}
  Let $(H, \rint, \gcoint)$ be an integral Hopf
  algebra. $H$ is nondegenerate if and only if $\beta$ is a Frobenius
  form for $(H,\gmult,\gunit)$, or equivalently, if and only if
  $\gamma'$ is a Frobenius form for $(H,\rcomult,\rcounit)$. Hence, if
  $H$ is nondegenerate, then $H$ admits a pre-HF algebra structure.
\end{lemma}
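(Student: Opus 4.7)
The plan is to first observe that the associativity axiom of Definition~\ref{def:frob-alt} holds for $\beta$ essentially for free. Since $\beta$ is constructed by composing the green multiplication with the cointegral, associativity of $\gmult$ gives
\[
\beta \circ (\gmult \otimes \id{H}) = \beta \circ (\id{H} \otimes \gmult).
\]
Hence $\beta$ is a Frobenius form for $(H,\gmult,\gunit)$ precisely when it admits an inverse $\bar\beta : I \to H \otimes H$ satisfying the two snake equations.

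The natural candidate for $\bar\beta$ is $\gamma$. By construction, $(\gamma,\beta)$ already exhibits $H$ as a half dual of itself, so one of the two required snake equations is free. The remaining one is, by definition, the assertion that $H$ is nondegenerate, which gives the forward direction immediately. For the converse, suppose $\beta$ is a Frobenius form with some inverse $\bar\beta$. The standard ``uniqueness of the cap'' zig-zag, using the half-dual snake to slide $\gamma$ past $\beta\circ\bar\beta$, forces $\bar\beta = \gamma$; alternatively one can appeal directly to Lemma~\ref{lem:equivalent-to-nondegenerate}, whose characterising equation must be satisfied by any such $\bar\beta$. Either route shows that both snake equations hold with $\gamma$ itself, so $H$ is nondegenerate.

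The equivalence involving $\gamma'$ follows by a completely dual argument. The associativity condition becomes coassociativity of $\rcomult$, which is automatic, while invertibility of $\gamma'$ (partnered with $\beta'$) reduces to nondegeneracy via the symmetry between the roles of $(\gmult,\rint)$ and $(\rcomult,\gcoint)$ in the integral Hopf algebra.

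Finally, for the pre-HF conclusion: given nondegeneracy, the Frobenius form $\beta$ on the green monoid produces, via the construction sketched after Definition~\ref{def:frob-alt}, a green comonoid $(\gcomult,\gcounit)$ fitting together with $(\gmult,\gunit)$ into a Frobenius algebra; dually, $\gamma'$ supplies a red monoid $(\rmult,\runit)$ completing the red Frobenius algebra. Combined with the original Hopf algebra $(\gmult,\gunit,\rcomult,\rcounit,\antipode)$, these furnish exactly the data required by Definition~\ref{def:pre-HF}. The main obstacle in a full writeup is the uniqueness step $\bar\beta = \gamma$, which must be justified using only the integral axioms and the half-dual snake; but this is a routine diagram chase once the setup is in place.
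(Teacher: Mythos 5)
Your proof is correct, and the converse direction takes a genuinely different route from the paper. The paper, after positing an inverse $\bar\beta$, appeals to Lemma~\ref{theorem:invertible-antipode} (the explicit formula for $\antipode^{-1}$) to verify by a Hopf-algebraic computation that $\gamma$ is the right inverse of $\beta$. You instead observe that since $(\gamma,\beta)$ already satisfy one snake equation (the half-dual structure asserted when $\gamma,\beta$ are defined) and $\bar\beta$ satisfies both, the standard zig-zag
$\bar\beta = (\id{H}\otimes\beta\otimes\id{H})(\gamma\otimes\bar\beta) = \gamma$
forces the two caps to coincide; this is just Proposition~\ref{prop:duals-are-unique} with a shared cup, uses no Hopf structure at all, and I have checked it goes through with only the single half-dual snake available. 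This buys a more elementary and more general argument (it would apply to any monoid with a half-dual pairing of this shape), at the cost of pushing all the Hopf-theoretic content into the prior assertion that $(\gamma,\beta)$ is a half-duality --- which the paper also takes as given at this point, so that is a fair trade. Your forward direction and the derivation of the pre-HF structure match the paper. Two small caveats: the phrase ``slide $\gamma$ past $\beta\circ\bar\beta$'' misdescribes the zig-zag (nothing of the form $\beta\circ\bar\beta$ appears; one snake is applied to one leg of $\bar\beta$ and then regrouped), and your fallback appeal to Lemma~\ref{lem:equivalent-to-nondegenerate} is not really an alternative proof as stated, since the characterising equation there does not mention $\bar\beta$; the zig-zag should be your primary (and sufficient) justification.
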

\noindent
Per Definition~\ref{def:frob-alt}, $\gcoint$ is the counit of the green
Frobenius algebra and the green comultiplication is obtained by dualising
$\gmult$ with $\beta$. The red unit and multiplication are obtained in
a similar manner.

\begin{definition}\label{def:integral-section}
  Let the object $H$ have a right half dual $H^*$. The
  \emph{integral morphism} $\Isec: H \rightarrow H$ is defined as shown
  below.
  \[
  \hopfisfrob{integral-projection}
  \]
  Note that this definition does \emph{not} depend on the choice of
  half dual -- see Lemma~\ref{lem:I-ind-of-half-dual}
\end{definition}

If $H$ is in \FPMod, then \Isec may be seen as a map from $H$ to the
space of left cointegrals. In fact, it is the retraction of the natural
injection from the space of left integrals into $H$. As such, it acts
trivially on integrals, and for every element $v \in H$, $\Isec(v)$ is
a left integral (which may be 0). In
Lemma~\ref{lem:I-preserves-integrals} we show that this holds in the
general case, where we have exchanged elements of a module for points
$p:I \rightarrow H$.

\begin{lemma}\label{lem:I-preserves-integrals}
  Given a point $p:I \rightarrow H$, and copoint $q:H \rightarrow I$,
  the morphism $\Isec \circ p$ is a left cointegral, and
  $q \circ \Isec$ is a right integral. In addition, $p$ is a left
  cointegral if and only if $\Isec \circ p = p$, and $q$ is a right
  integral if and only if $q \circ \Isec = q$.
\end{lemma}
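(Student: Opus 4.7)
The plan is to work directly from the diagrammatic definition of $\Isec$ given in Definition~\ref{def:integral-section}, together with the defining equations of left cointegrals and right integrals. The key point is that the composite $\Isec \circ p$ already has the shape of a left cointegral: it is built from $\rcomult$, $\antipode$, the half-dual pair, and the integrals $\rint, \gcoint$, and Hopf-algebra cancellations turn exactly the right sub-diagram into a counit.

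First I would unfold $\Isec \circ p$ as a diagram and verify the left cointegral equation. Postcomposing with $\gmult$ applied to a free left $H$-strand, I would use the bialgebra law to push the multiplication past the comultiplication inside $\Isec$, and then the antipode equation $\gmult \circ (\antipode \otimes \id_H) \circ \rcomult = \gunit \circ \rcounit$ to collapse a pair of $H$-wires. After these cancellations only an outer $\rcounit$ remains on the free strand, which is precisely the left cointegral equation for $\Isec \circ p$. The statement for $q \circ \Isec$ is strictly dual: precomposing with $\rcomult$ on the free strand and applying the same manipulations in reverse order recovers the right integral equation.

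The two iff statements are then almost immediate in one direction: the forward implications $\Isec\circ p = p \Rightarrow p$ is a left cointegral and $q\circ\Isec = q \Rightarrow q$ is a right integral follow from what has just been shown. For the converse, assume $p$ is a left cointegral. In the expanded diagram for $\Isec \circ p$, the cointegral equation itself lets us replace the subdiagram where $p$ is multiplied into $H$ by a counit attached to $p$. After this substitution the antipode is applied to a unit, which simplifies by the standard identity $\antipode \circ \gunit = \gunit$, and the remaining half-dual snake collapses by Definition~\ref{def:half-dual}. What is left is the normalization scalar $\gcoint \circ \rint = \id_I$ supplied by the definition of an integral Hopf algebra, multiplied by $p$; hence $\Isec \circ p = p$. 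The proof that $q \circ \Isec = q$ when $q$ is a right integral is symmetric.

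The hardest step will be the first diagrammatic calculation: one must thread the free $H$-strand from the cointegral equation through the definition of $\Isec$ in such a way that the antipode rule can be applied cleanly. Here I would exploit Lemma~\ref{lem:I-ind-of-half-dual} to pick the most convenient half-dual structure for the manipulation (in particular, the canonical self-duality arising from $\rint$ and $\gcoint$), after which the snake equation is explicitly available and the argument becomes a careful but essentially mechanical bookkeeping of Hopf axioms. The right-integral statement follows by diagram reflection, and both iff directions reduce to the normalization $\gcoint \circ \rint = \id_I$.
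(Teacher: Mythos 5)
Your first calculation is essentially the paper's: the paper likewise reduces the claim to a morphism-level identity (postcomposing $\Isec$ with $\gmult$ on a free strand produces $\rcounit$ on that strand), and proves it by the combined bialgebra-plus-Hopf-law identity (its Lemma~\ref{lemma:alternate-bialgebra}), the half-dual equation of Definition~\ref{def:half-dual}, the fact that $\antipode$ is a bialgebra homomorphism $\Hop\to H$, and associativity; the right-integral half is dual. That part of your plan is sound.

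There are, however, two genuine problems in the remainder, both stemming from a misreading of Definition~\ref{def:integral-section}. First, $\Isec$ is \emph{not} built from the integrals $\rint$ and $\gcoint$: it is constructed from the Hopf structure and the half-dual pair alone (which is why Lemma~\ref{lem:I-ind-of-half-dual} concerns independence of the choice of half dual, and why the present lemma is stated for arbitrary points $p$ and copoints $q$ with no integral Hopf algebra in its hypotheses). Consequently your converse direction cannot terminate in ``the normalization $\gcoint\circ\rint=\id{I}$ multiplied by $p$'' --- that scalar is not a hypothesis of the lemma and does not occur in the diagram. In the paper's computation, the left-cointegral equation for $p$ absorbs the multiplication inside $\Isec$ into a counit, after which the counit law and the single snake equation of Definition~\ref{def:half-dual} collapse the diagram to $p$; no normalization is needed or available. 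Second, your proposal to ``pick the most convenient half-dual structure, namely the canonical self-duality arising from $\rint$ and $\gcoint$'' is both unavailable and circular: no integral structure is assumed here, the pair $\beta,\gamma$ gives only a \emph{half} dual in general (a two-sided snake requires nondegeneracy, Lemma~\ref{lem:equivalent-to-nondegenerate}), and this lemma is invoked in Theorem~\ref{theorem:frob} precisely to establish that $\rint$ is a cointegral and that $H$ is nondegenerate, so you may not presuppose either. Work with the given arbitrary half dual throughout; the computation needs only its one snake equation.
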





\begin{definition}\label{def:frob-cond}
  We say that a Hopf algebra satisfies the \emph{Frobenius condition}
  if there exists maps \rint and \gcoint such that
  \[
  \tikzfig{hopf-is-frob/Frob-condition}
 \]
\end{definition}





    

\begin{theorem}\label{theorem:frob}
  If $H$ satisfies the Frobenius condition, then $H$ admits a pre-HF
  algebra structure with the Frobenius forms and their inverses as shown below.
  \[
  \inltf{green-cup} := \inltf{hopf-is-frob/cup1-def} \qquad
  \inltf{green-cap} := \inltf{hopf-is-frob/cap1-def} \qquad
  \qquad
  \inltf{red-cap} := \inltf{hopf-is-frob/dual-cap-def} \qquad
  \inltf{red-cup} := \inltf{hopf-is-frob/dual-cup-def} \qquad
  \]
  Further, $(H,\rint,\gcoint)$ is an integral Hopf algebra.
\end{theorem}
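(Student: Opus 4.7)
The plan is to reduce this to Lemma~\ref{lem:nondegenerate-implies-preHF} by showing that the hypothesized $\rint$ and $\gcoint$ actually assemble $H$ into a nondegenerate integral Hopf algebra, from which the pre-HF structure (with the specified Frobenius forms) follows automatically.

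First I would verify that $(H,\rint,\gcoint)$ is an integral Hopf algebra. This requires three checks: $\rint$ is a left cointegral, $\gcoint$ is a right integral, and the normalisation $\gcoint\circ\rint = \id{I}$ holds. All three should fall out of the Frobenius condition (Definition~\ref{def:frob-cond}) by bending wires with $\rcomult,\rcounit,\gmult,\gunit$ and invoking the Hopf law together with the (co)unit laws for the bialgebra structure. The normalisation is the cleanest to extract: composing the Frobenius condition with $\rcounit$ on one side and $\gunit$ on the other collapses the diagram, using the bialgebra laws $\rcounit\circ\gunit = \id{I}$ and counit erasure, to leave $\gcoint\circ\rint$ equal to the identity scalar. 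The integral conditions are obtained similarly by cupping and capping one port of the Frobenius condition so that one side becomes the defining equation of an integral and the other simplifies via the Hopf law.

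Next I would show nondegeneracy. By Lemma~\ref{lem:equivalent-to-nondegenerate}, this reduces to verifying a single diagrammatic equation; I would derive it directly from the Frobenius condition by composing with an appropriate morphism (again using the Hopf law, the unit laws, and the fact that $\gcoint\circ\rint = \id{I}$ established above). Once $H$ is nondegenerate, Lemma~\ref{lem:nondegenerate-implies-preHF} guarantees that $\beta$ is a Frobenius form for $(H,\gmult,\gunit)$ and $\gamma'$ is a Frobenius form for $(H,\rcomult,\rcounit)$, so $H$ carries a pre-HF algebra structure. The identification of the cups and caps with the formulas displayed in the theorem statement is immediate because $\beta,\gamma,\beta',\gamma'$ are defined in exactly those terms in the preceding definition.

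The main obstacle I anticipate is extracting the individual integral/cointegral equations cleanly from the single Frobenius condition — the diagrammatic manipulations have to thread the antipode and Hopf law carefully, and the order in which one applies the bialgebra identities matters to avoid circular dependence on the very pre-HF structure we are trying to build. Beyond that, each verification reduces to a short Hopf-algebra calculation, and the pre-HF conclusion is then a direct citation of Lemma~\ref{lem:nondegenerate-implies-preHF}.
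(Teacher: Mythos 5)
Your proposal is correct and follows essentially the same route as the paper: show that $(H,\rint,\gcoint)$ is an integral Hopf algebra, establish nondegeneracy from the Frobenius condition, and conclude via Lemma~\ref{lem:nondegenerate-implies-preHF}. The one shortcut the paper takes that you miss is that the individual integral/cointegral equations you anticipate as the main obstacle come for free from Lemma~\ref{lem:I-preserves-integrals}, since the Frobenius condition immediately yields $\Isec\circ\rint=\rint$ and $\gcoint\circ\Isec=\gcoint$.
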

\begin{proof}
  The Frobenius condition implies that $\Isec\circ\rint=\rint$, and
  $\gcoint\circ\Isec=\gcoint$. Hence, by
  Lemma~\ref{lem:I-preserves-integrals}, \rint is a left cointegral
  and \gcoint is a right integral. Now, we only need to show that it
  is nondegenerate. Observe that

  \[
  \hopfisfrob{frob-condition-proof-1}
  \]

  where $(1.)$ is due to the Frobenius condition, $(2.)$ comes from
  associativity and $(3.)$ comes from the fact that the antipode is a
  bialgebra homomorphism $\Hop \rightarrow H$. The presence of half
  duals gives us $(4.)$, and $(5.)$ is due to the Hopf law. We then
  get the following identity
\[
\hopfisfrob{Frob-cup-cap}
\]
This is the identity required to make $(H, \rint, \gcoint)$
nondegenerate, and we have our result by Lemma
\ref{lem:nondegenerate-implies-preHF}.
\end{proof}
The explicit definitions of the green comonoid and red monoid
structures are shown below.
\[
\hopfisfrob{def-of-new-operations-2}
\]
\suck

As the name suggests, $H$ fulfilling the Frobenius condition is
equivalent to $H$ admitting a Frobenius algebra structure. To prove
this, we must first prove the following intermediate lemma
  
\begin{lemma}\label{lem:equaliser}
    Let the object $H$ have a right half dual $H^*$, where $H$ is a
    Hopf algebra. $H$ fulfills the Frobenius condition if and only if
    there is an equaliser \rint of
    \[
    \inlfropf{equaliser-statement}
    \]
    if and only if there is a coequaliser \gcoint of
    \[
    \inlfropf{equaliser-statement-2}
    \]
    
  \end{lemma}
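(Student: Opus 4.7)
The plan is to prove a cyclic chain of implications among the three statements. Let me denote the parallel pair in the equaliser statement by $u, v : H \to X$ and the pair in the coequaliser statement by $u', v' : Y \to H$. The choice of these pairs should encode the defining equations of integrals, in the sense that a point $p : I \to H$ satisfies $u \circ p = v \circ p$ iff $p$ is a left cointegral, and dually a copoint $q : H \to I$ satisfies $q \circ u' = q \circ v'$ iff $q$ is a right integral. Under this reading, the half-dual structure on $H$ is what allows the cointegral equation, which originally lives in $\mathrm{Hom}(H,H)$, to be re-expressed as an equation between parallel morphisms with $H$ as source.

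For the forward direction, Frobenius condition $\Rightarrow$ equaliser: Theorem \ref{theorem:frob} already tells us that the Frobenius condition yields an integral Hopf algebra $(H, \rint, \gcoint)$ with $\gcoint \circ \rint = \id_I$. In particular, \rint is a left cointegral, so $u \circ \rint = v \circ \rint$. For universality, suppose $f : Z \to H$ satisfies $u \circ f = v \circ f$. This is the natural generalisation of the cointegral equation to maps with arbitrary domain, and by an argument parallel to Lemma \ref{lem:I-preserves-integrals} it is equivalent to $\Isec \circ f = f$. The candidate factorisation is $\gcoint \circ f : Z \to I$; I would verify $\rint \circ (\gcoint \circ f) = \Isec \circ f = f$ by unfolding Definition \ref{def:integral-section} of \Isec together with the explicit form of the Frobenius cup and cap provided by Theorem \ref{theorem:frob}, which effectively identify $\rint \circ \gcoint$ with \Isec. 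Uniqueness of the factorisation is immediate from $\gcoint \circ \rint = \id_I$.

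For the reverse direction, equaliser $\Rightarrow$ Frobenius condition: given the equaliser $\rint : I \to H$, I would construct \gcoint by applying the universal property to $\Isec : H \to H$. By Lemma \ref{lem:I-preserves-integrals} the image of \Isec consists of left cointegrals, so \Isec equalises $u$ and $v$ and therefore factors uniquely through \rint as $\Isec = \rint \circ \gcoint$ for a unique $\gcoint : H \to I$. The Frobenius condition of Definition \ref{def:frob-cond} is then obtained by inserting this factorisation into the defining diagram for \Isec and simplifying using the half-dual snake equation. The coequaliser equivalence is formally dual throughout: swap cointegral for integral, equaliser for coequaliser, and use that $q \circ \Isec = \Isec'$-like arguments force the analogous factorisation through \gcoint.

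The principal obstacle will be making the universal-property step precise: specifically, showing that in the forward direction the morphism $f$ genuinely satisfies $\Isec \circ f = f$ (rather than only the weaker equaliser condition), and that in the reverse direction the resulting $\gcoint$ is genuinely a right integral and interacts correctly with \rint to give the Frobenius equation. Both reduce to unwinding the explicit diagrammatic description of \Isec and exploiting that \Isec is, under the Frobenius condition, an idempotent splitting through $I$ via $\rint$ and $\gcoint$.
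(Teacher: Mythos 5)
Your proposal follows essentially the same route as the paper, with one difference in packaging and one step left open. In the forward direction the paper does not verify the universal property against an arbitrary cone $f : Z \to H$; instead it shows that \rint is a \emph{split} equaliser: \gcoint is a retract of \rint, there is an explicit retract of one morphism of the parallel pair, and the split-equaliser identity is precisely the Frobenius condition. This buys two things: split equalisers are absolute, and one never needs to extend Lemma~\ref{lem:I-preserves-integrals} from points $p : I \to H$ to arbitrary morphisms $f : Z \to H$, as your argument does. That extension is in fact harmless --- the proof of Lemma~\ref{lem:I-preserves-integrals} first establishes a point-free identity about $\Isec$ composed with $\gmult$ and only then composes with the point, so it goes through verbatim with an extra $Z$-wire --- but you would need to say so explicitly. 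Your direct verification $\rint \circ \gcoint \circ f = \Isec \circ f = f$, with uniqueness because $\gcoint \circ \rint = 1_I$ makes \rint split monic, is the same computation unpacked. Your reverse direction (factor $\Isec$ through the equaliser to manufacture \gcoint) is also the paper's argument.

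The one genuine omission is in that reverse direction. The factorisation $\Isec = \rint \circ \gcoint$ is only half of the Frobenius condition; you still owe the normalisation $\gcoint \circ \rint = 1_I$, and ``inserting the factorisation into the defining diagram for $\Isec$ and simplifying'' will not produce it. The paper closes this as follows: \rint equalises the parallel pair, hence is a left cointegral, hence $\Isec \circ \rint = \rint$ by Lemma~\ref{lem:I-preserves-integrals}; substituting the factorisation gives $\rint \circ (\gcoint \circ \rint) = \rint \circ 1_I$, and since an equaliser is a monomorphism this cancels to $\gcoint \circ \rint = 1_I$. You flagged this interaction as a ``principal obstacle'' but did not supply the argument; the monicity of the equaliser is the missing ingredient.
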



  \begin{theorem}\label{theorem:admits-Frobenius}
  Let $H$ be a Hopf algebra. $H$ satisfies the Frobenius condition if
  and only if $H$ admits a Frobenius structure on its multiplication
  or its comultiplication. Hence, $H$ fulfills the Frobenius condition
  if and only if it admits a pre-HF algebra structure.
  \end{theorem}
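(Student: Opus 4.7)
The plan is to prove each direction of the equivalence separately, after which the pre-HF corollary follows formally. One direction is already in hand: if $H$ satisfies the Frobenius condition, Theorem~\ref{theorem:frob} yields a pre-HF structure, and by Definition~\ref{def:pre-HF} this contains both a Frobenius algebra on $(\gmult,\gunit)$ and one on $(\rcomult,\rcounit)$, so either of the two stated structures is produced.

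For the converse, suppose $(\gmult,\gunit)$ carries a Frobenius algebra with form $\beta:H\otimes H\to I$ and inverse $\bar\beta$. By Lemma~\ref{lem:frob-self-dual} these maps make $H$ self-dual and so, in particular, provide a right half dual of $H$ (namely $H$ itself). The plan is then to invoke Lemma~\ref{lem:equaliser}: it suffices to exhibit an equaliser $\rint$ of the pair of maps appearing there. A natural candidate is obtained by transporting the unit of the dual Hopf algebra $H^*$ (Definition~\ref{def:dual-hopf-alg}) across the self-duality back to a point of $H$ --- diagrammatically, a dressing of $\gunit$ by $\bar\beta$ together with the antipode --- and $\gcoint$ is then built dually from $\rcounit$ and $\beta$. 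Verifying that this $\rint$ equalises the required pair and is universal reduces to a diagrammatic calculation combining the Frobenius law, the Hopf law, the anti-homomorphism property of the antipode (Proposition~\ref{prop:antipode-props}), and the nondegeneracy of $\beta$. The symmetric case, where the Frobenius structure sits on $(\rcomult,\rcounit)$, is handled by interchanging colours throughout the above argument, or equivalently by passing to $\catC^{op}$ where the roles of multiplication and comultiplication swap and Hopf algebras remain Hopf algebras.

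The \emph{Hence} clause is then immediate: a pre-HF algebra by Definition~\ref{def:pre-HF} supplies a Frobenius structure on $(\gmult,\gunit)$, which by the first equivalence forces the Frobenius condition; conversely the Frobenius condition produces a pre-HF structure via Theorem~\ref{theorem:frob}. The main obstacle will be the converse direction of the first equivalence: showing that the explicit candidate $\rint$ built from $\beta$ really is the equaliser demanded by Lemma~\ref{lem:equaliser}. That $\rint$ equalises the two parallel maps should follow fairly directly once they are rewritten using the self-duality, but the universality clause is where the nondegeneracy of $\beta$ must do its work, and I expect this to require the most delicate diagrammatic manipulation.
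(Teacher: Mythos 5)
Your overall route is the same as the paper's: one direction via Theorem~\ref{theorem:frob}, and for the converse you use the Frobenius self-duality from Lemma~\ref{lem:frob-self-dual} to obtain a (half) dual, build a candidate cointegral by transporting $\rcounit$ across that self-duality, and reduce to Lemma~\ref{lem:equaliser}; the comultiplication case and the ``Hence'' clause are handled identically. The candidate point you describe is essentially the paper's $\alpha$, and the cone condition (that $\alpha$ equalises the two parallel maps) does indeed follow from the Frobenius and Hopf laws as you expect.

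The step you defer to ``delicate diagrammatic manipulation'' --- universality --- is where your plan has a real hole, and the paper avoids it with a device you are missing: it shows $\alpha$ is a \emph{split} equaliser. Concretely, $\gcounit$ is a retraction of $\alpha$, there is an explicit retraction of $\gcomult$ (the other leg of the fork), and these satisfy the split-fork identity; splitness then makes $\alpha$ an absolute equaliser, so the universal property comes for free from the retraction data. By contrast, proving universality directly --- that \emph{every} morphism $f:X\to H$ equalising the pair factors uniquely through the point $\alpha: I\to H$ --- is not a consequence of nondegeneracy of $\beta$ in any straightforward way; it amounts to a uniqueness-of-integrals statement, and without exhibiting the splitting (or some equivalent factorisation $f = \alpha\circ(\gcounit\circ f)$ obtained from it) there is no obvious diagram chase that delivers it. So you should replace ``verify universality from nondegeneracy'' with ``exhibit the two retractions and check the split-fork condition''; once that is done the rest of your argument, including the appeal to Lemma~\ref{lem:equaliser} and the duality step for the comultiplication case, goes through exactly as the paper's does.
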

  \begin{proof}
    Clearly if $H$ satisfies the Frobenius condition, then by Theorem
    \ref{theorem:frob} it admits a Frobenius structure. For the
    converse, suppose that $H$ admits a Frobenius structure
    $(H,\gmult, \gunit, \gcomult, \gcounit)$ on its
    multiplication. This provides a cup and a cap that makes $H$ self
    dual. Set $\alpha:=\inlfropf{pre-integral}$. We will show that
    $\alpha:I \rightarrow H$ is a split equaliser of the diagram
    \[
      \inlfropf{admits-Frobenius-proof-1c}.
    \]
    
  Note that the lower morphism is simply \gcomult. To show that
  $\alpha$ is a split equaliser, we must first show
  that it is is a cone of the appropriate diagram. This follows from
  the properties of the Frobenius algebra
  \[
  \hopfisfrob{admits-Frobenius-proof-2}
  \]
  We now need to find a retract of $\alpha$ and
  \gcomult. It is clear that
  \gcounit is a retract of $\alpha$, and
  \inlfropf{admits-Frobenius-proof-3} is a retract of
  \gcomult. The final condition for $\alpha$ to be a
  split equaliser is
  \[
  \hopfisfrob{admits-Frobenius-proof-4}
  \]
  
  Thus, $\alpha$ a split equaliser. By Lemma \ref{lem:equaliser}, this
  implies that $H$ must satisfy the Frobenius condition. If $H$ admits
  a Frobenius algebra on its comultiplication, then the same result
  holds by duality. Hence, when $H$ admits a pre-HF algebra structure,
  it fulfills the Frobenius condition, and by Theorem
  \ref{theorem:frob}, we get our equivalence.

\end{proof}

  It may be important to mention that if $H$ admits a Frobenius
  structure, then this is not necessarily the same structure as the
  one given by Theorem~\ref{theorem:frob}. In Proposition
  \ref{prop:frob-structs-are-invertible-elts}, we show that Frobenius
  structures are not unique. One may start with different Frobenius
  structures, and end up with the same pre-HF algebra. In the proof of
  Lemma \ref{theorem:admits-Frobenius}, one constructs an integral and
  cointegral, and it is these that determine the appropriate Frobenius
  structure.\\


Pareigis \cite{PAREIGIS1971588} showed that in \FPMod, a Hopf
  Algebra will admit a Frobenius algebra when integrals only differ by
  a scalar multiple. This is clear from Lemma \ref{lem:equaliser}.
  Under mild assumptions, this is equivalent to the Frobenius
  condition.

In a monoidal category, an object $A$ is said to have \emph{enough
  points} if, for all morphisms $f,g:A\to B$, we have
\[
(\forall x : I\to A, \ \ fx = gx) \Rightarrow f = g\;.
\]

\begin{lemma}
\label{lem:integrals-are-multiples}
  Let $(H,\rint,\gcoint)$ be an integral Hopf algebra and suppose that
  $H$ has enough points. If every left cointegral (right integral) is
  a scalar multiple of \runit (resp. \gcounit) then $H$ fulfills the
  Frobenius condition
\end{lemma}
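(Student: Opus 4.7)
The plan is to use the integral morphism $\Isec$ from Definition~\ref{def:integral-section} to factor through the unit object, promoting a pointwise scalar-multiple identity to a morphism-level equation via the enough-points hypothesis. The target identity is $\Isec = \rint\circ\gcoint$, which I expect to unpack to the Frobenius condition of Definition~\ref{def:frob-cond}.

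First I would apply Lemma~\ref{lem:I-preserves-integrals}: for any point $x:I\to H$, the composite $\Isec\circ x$ is a left cointegral. By the hypothesis that every left cointegral is a scalar multiple of $\rint$, there exists a scalar $\lambda_x:I\to I$ with $\Isec\circ x = \rint\circ\lambda_x$. Post-composing with $\gcoint$ and using the defining identity $\gcoint\circ\rint = \id{I}$ of the integral Hopf algebra isolates the scalar as $\lambda_x = \gcoint\circ\Isec\circ x$. Substituting back gives
\[
\Isec\circ x \;=\; \rint\circ\gcoint\circ\Isec\circ x
\]
for every point $x:I\to H$. The enough-points hypothesis then promotes this to the morphism equation $\Isec = \rint\circ\gcoint\circ\Isec$. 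Since $\gcoint$ is itself a right integral, the other half of Lemma~\ref{lem:I-preserves-integrals} gives $\gcoint\circ\Isec = \gcoint$, and substituting into the previous identity produces the clean factorisation $\Isec = \rint\circ\gcoint$.

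It remains to recognise this morphism equation as the Frobenius condition, by unpacking the diagram defining $\Isec$ in Definition~\ref{def:integral-section} and comparing with Definition~\ref{def:frob-cond}. I expect this final identification to be the main obstacle, though it should be essentially bookkeeping: the conceptual substance of the argument lies entirely in the passage from \emph{scalar multiples of $\rint$} to \emph{factors through $I$ via $\rint$ and $\gcoint$}, and the enough-points hypothesis is precisely the tool that effects this passage. Note that the symmetric hypothesis on right integrals is needed only through the fact that $\gcoint$ is a right integral, which is already part of the integral Hopf algebra data, so no separate dual argument using copoints is required.
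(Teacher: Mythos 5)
Your proposal is correct and follows essentially the same route as the paper's proof: both apply Lemma~\ref{lem:I-preserves-integrals} to see that $\Isec$ sends points to left cointegrals, extract the scalar via the hypothesis and identify it as $\gcoint$ post-composed with the point using $\gcoint\circ\rint=\id{I}$ and $\gcoint\circ\Isec=\gcoint$, then invoke enough points to conclude $\Isec=\rint\circ\gcoint$. The only cosmetic difference is that you simplify $\gcoint\circ\Isec$ to $\gcoint$ after promoting to a morphism-level equation rather than pointwise beforehand.
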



Since \FPMod (and \FVect) are categories where every object has enough
points, Lemma~\ref{lem:integrals-are-multiples} implies Pareigis'
condition is exactly the Frobenius condition.

We may now consider the main theorem of the paper - when exactly does
a Hopf algebra admit a Hopf-Frobenius algebra?

\begin{theorem}\label{thm:iff-frob-condition}
  Let $H$ be a Hopf algebra such that the object $H$ has some weak
  right dual $H^*$. Then $H$ admits a Hopf-Frobenius algebra structure
  if and only if $H$ fulfills the Frobenius condition.
\end{theorem}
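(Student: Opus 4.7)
The plan is to handle the two directions separately; one is essentially immediate from the work already done, and the other requires checking two extra pieces of structure.

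For the easy direction, suppose $H$ admits a Hopf-Frobenius algebra structure. By Definition~\ref{def:HF-algebra}, $(\gmult,\gunit,\gcomult,\gcounit)$ is a Frobenius algebra on the multiplication of $H$. Applying Theorem~\ref{theorem:admits-Frobenius} then yields the Frobenius condition directly.

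For the converse, suppose $H$ satisfies the Frobenius condition. By Theorem~\ref{theorem:frob}, $(H,\rint,\gcoint)$ is an integral Hopf algebra, it is nondegenerate, and it carries a pre-HF algebra structure with the green and red Frobenius forms given by the explicit $\beta, \gamma, \beta', \gamma'$ from Definition~\ref{def:half-dual}'s follow-up; in particular, these forms make $H$ self-dual in two ways. To upgrade this pre-HF algebra to an HF algebra I have to verify the two remaining clauses of Definition~\ref{def:HF-algebra}: (a) the antipode $\antipode$ coincides with the diagram $\hopfisfrob{antipode-form-2}$ built from the Frobenius cups and caps, and (b) with $\antipoded$ defined via the rotated diagram, the quintuple $(\rmult,\runit,\gcomult,\gcounit,\antipoded\,)$ is a Hopf algebra.

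For (a) the strategy is to use Proposition~\ref{prop:antipode-props}(2), which says $\antipode : \Hop \to H$ is a bialgebra homomorphism, together with Lemma~\ref{theorem:invertible-antipode}, which gives an explicit diagrammatic inverse to $\antipode$ in terms of $\rint$ and $\gcoint$. Substituting the explicit Frobenius cups and caps produced in Theorem~\ref{theorem:frob} into the claimed form and simplifying by the Hopf law, associativity, and the antihomomorphism property should match $\antipode$; alternatively, composing the claimed expression with the inverse of $\antipode$ from Lemma~\ref{theorem:invertible-antipode} should collapse to the identity by the snake equations and the integral axioms. For (b), since pre-HF already gives us the Frobenius and bialgebra pieces, only the Hopf law for $\antipoded$ against $(\rmult,\rcomult)$ remains. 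The rotated antipode $\antipoded$ is, by construction, the $(\cdot)^{*}$-dual of $\antipode$ taken with respect to the self-duality $\beta,\gamma$; so the Hopf law for $\antipoded$ follows by dualising the Hopf law for $\antipode$ through the snake equations, exactly as the Hopf law for the dual Hopf algebra of Definition~\ref{def:dual-hopf-alg} is derived from that of $H$.

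The main obstacle I expect is (a): showing the canonical antipode equals the specific diagrammatic expression built out of the Frobenius forms. The uniqueness statement of Proposition~\ref{prop:antipode-props}(1) means it suffices to verify that this expression satisfies the defining Hopf law, which reduces the problem to a diagrammatic manipulation using the Frobenius equations for the green algebra, the bialgebra laws, and the integral identities produced by the Frobenius condition, but the bookkeeping is where the work concentrates.
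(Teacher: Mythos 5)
Your overall architecture matches the paper's: the forward direction via Theorem~\ref{theorem:admits-Frobenius}, and the converse via Theorem~\ref{theorem:frob} followed by verification of the two extra clauses of Definition~\ref{def:HF-algebra}, with clause (a) handled through Lemma~\ref{theorem:invertible-antipode} (the paper packages this as its Lemma~\ref{lemma:iff-integral}, and your uniqueness-of-antipode fallback is also viable). The gap is in your treatment of clause (b). First, the pre-HF structure does \emph{not} already give you ``the bialgebra pieces'' for the pair $(\rmult,\runit,\gcomult,\gcounit)$: Definition~\ref{def:pre-HF} only asserts that the two monochromatic pairings are Frobenius and that $(\gmult,\gunit,\rcomult,\rcounit,\antipode)$ is Hopf. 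That the red monoid and green comonoid jointly satisfy the copy, cocopy, bialgebra and scalar laws is one of the things that must be proved, not assumed.

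Second, your plan to obtain the Hopf law for $\antipoded$ by ``dualising the Hopf law for $\antipode$'' does not land on the right structure. Writing $(\cdot)\gtrans$ for the transpose along the green self-duality, the paper's Lemma~\ref{lemma:how-green-relates-to-dagger} shows that $(\gmult)\gtrans=\gcomult$ but $(\rcomult)\gtrans$ is $\rmult$ \emph{precomposed with the symmetry}. So the dual Hopf algebra $H\gtrans$ in the sense of Definition~\ref{def:dual-hopf-alg} is not $(H,\rmult,\runit,\gcomult,\gcounit)$ but its ``half-opposite'' $(H\gtrans)^\sigma$-relative, and direct dualisation of the Hopf law only yields the Hopf law for the swapped multiplication with antipode $\antipode\gtrans$. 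To pass to the unswapped structure you must invoke Proposition~\ref{prop:antipode-props}(3), which requires the invertibility of the antipode (supplied here by Lemma~\ref{theorem:invertible-antipode}) and delivers the antipode as the \emph{inverse}, i.e.\ $\antipoded=(\antipode\gtrans)^{-1}=(\antipode^{-1})\gtrans$, not $\antipode\gtrans$. Since $H$ is neither commutative nor cocommutative in general, $\antipode\gtrans$ and its inverse genuinely differ, so your identification of $\antipoded$ with the plain transpose of $\antipode$ is incorrect as stated; a final calculation is then still needed to show that $(\antipode^{-1})\gtrans$ equals the rotated diagram $\daggerantipode$ prescribed by Definition~\ref{def:HF-algebra}.
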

\begin{proof}[Sketch of Proof.]  
  We explore this in full detail in the appendix. Here, we only
  outline a sketch of the proof

  If $H$ is a Hopf-Frobenius algebra, then it admits a Frobenius
  algebra, and therefore, by Theorem \ref{theorem:admits-Frobenius}, it
  fulfills the Frobenius condition.
  
  Consider the converse. By Theorem \ref{theorem:frob}, we know that
  if $H$ fulfills the Frobenius condition, then $H$ admits a pre-HF
  algebra and $(H,\runit,\gcounit)$ is an integral Hopf algebra. It
  follows from Lemma~\ref{theorem:invertible-antipode} that
  $\antipode = \antipodeform$, and we show in Lemma
  \ref{lemma:iff-integral} that this is true if and only if
  $(H,\runit,\gcounit)$ is an integral Hopf algebra. Hence, $H$ admits
  Hopf-Frobenius structure if and only if
  $(H,\rmult,\runit,\gcomult,\gcounit,\antipoded)$ forms a Hopf
  algebra, where $\antipoded =\daggerantipode$. We begin by proving
  that $(H,\rmult,\runit,\gcomult,\gcounit)$ is a bialgebra, and then
  that $\antipoded =\daggerantipode$ is the appropriate antipode to
  make this bialgebra a Hopf algebra.

  $H$ admits pre-HF algebra structure, so it has a structure that
  makes $H$ self dual. Let $(\cdot)\gtrans$ be the duality defined by
  the green Frobenius algebra. The dual of a Hopf algebra is a Hopf
  algebra, in the sense of Definition
  \ref{def:dual-hopf-alg}. Therefore, applying the dual to $H$ will
  give us another Hopf algebra. Lemma
  \ref{lemma:how-green-relates-to-dagger} tells us that
  \suck
  \[
  \left( \inltf{green-mult}\!\!\right)\gtrans =
  \inltf{green-comult}
  \qquad\qquad
  \left( \inltf{red-comult}\!\!\right)\gtrans =
  \inltf{red-mult-swap}
  \]
  \suck
  
  Set
  $H^{\gtrans}:=(H,\rcomult\!{}\gtrans,\rcounit{}\gtrans,\gmult\!{}\gtrans,\gunit{}\gtrans,\antipode{}\gtrans)$
  to be the Hopf algebra obtained when we apply $(\cdot)\gtrans$ to
  $H$. The above result tells us that
  $(H,\rmult,\runit,\gcomult,\gcounit)$ is equal to
  $(H^{\gtrans})^\sigma$ when viewed as a bialgebra. Therefore, by
  Proposition \ref{prop:antipode-props} we only need to show that
  $\antipode\gtrans$ has an inverse. But the duality operation,
  $(\cdot)\gtrans$, maps isomorphisms to isomorphisms, so since
  $\antipode$ is invertible, $\antipoded := (\antipode^{-1})\gtrans$
  will be the antipode of $(H,\rmult,\runit,\gcomult,\gcounit)$. All
  that is left is to show that $\antipoded =\daggerantipode$, and this
  is accomplished by simple calculation.

\end{proof}

Let us summarise the various equivalent conditions for a Hopf algebra
to be Hopf-Frobenius.

\begin{theorem}
  Let $H$ be a Hopf algebra. The following conditions are equivalent
  \begin{itemize}
  \item $H$ admits a Hopf-Frobenius algebra structure
  \item $H$ admits a pre-HF algebra structure
  \item $H$ fulfills the Frobenius condition
  \item $H$ admits a Frobenius algebra structure on the multiplication
    or the comultiplication
  \item $H$ admits an equaliser \rint of 
    \[
    \inlfropf{equaliser-statement}
    \]
  \item $H$ admits an integral algebra structure, $(H, \rint,
    \gcoint)$, and $H$ is nondegenerate
  \item $H$ admits an integral algebra structure, $(H, \rint,
    \gcoint)$, and $\gcoint \circ \antipode \circ \rint = 1_I$
  \end{itemize}
\end{theorem}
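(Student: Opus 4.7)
The plan is to establish all seven conditions as equivalent by assembling a cycle of implications from previously proved results rather than proving each equivalence from scratch. Most of the work has already been done earlier in the paper, so the proof is largely bookkeeping.

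First I would close the cycle on the first five bullets. The implication that a Hopf-Frobenius algebra yields a pre-HF algebra is immediate from Definitions~\ref{def:pre-HF} and~\ref{def:HF-algebra}, and a pre-HF algebra by definition carries a Frobenius structure on its multiplication. Theorem~\ref{theorem:admits-Frobenius} then simultaneously handles three bullets: admitting a Frobenius structure on the multiplication or comultiplication is equivalent to the Frobenius condition, which is equivalent to admitting a pre-HF algebra. Lemma~\ref{lem:equaliser} folds in the equaliser characterisation. Finally, Theorem~\ref{thm:iff-frob-condition} closes the loop by giving the reverse implication that the Frobenius condition upgrades a pre-HF algebra to a full HF algebra.

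Next I would fit in the sixth bullet. Theorem~\ref{theorem:frob} shows that whenever the Frobenius condition holds, the chosen $\rint, \gcoint$ already form an integral Hopf algebra and the proof exhibits nondegeneracy directly. Conversely, Lemma~\ref{lem:nondegenerate-implies-preHF} tells us that a nondegenerate integral Hopf algebra admits a pre-HF structure, placing this bullet squarely inside the cycle.

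The main obstacle, and the only equivalence requiring genuine work, is the last bullet. Given an integral structure with $\gcoint \circ \antipode \circ \rint = 1_I$, I would invoke Lemma~\ref{theorem:invertible-antipode} to get an explicit inverse for $\antipode$ and then manipulate the resulting diagram so as to derive the identity of Lemma~\ref{lem:equivalent-to-nondegenerate}, yielding nondegeneracy and reducing to the previous bullet. For the converse, assuming $H$ is nondegenerate and integral, I would take the formula from Lemma~\ref{theorem:invertible-antipode} for $\antipode^{-1}$, precompose and postcompose appropriately with $\rint$ and $\gcoint$, and use the integral equations together with the snake equations for the cup/cap $\beta, \gamma$ to collapse the diagram to a scalar; this scalar is $\gcoint \circ \antipode \circ \rint$ and must equal $1_I$ by the integral normalisation $\gcoint \circ \rint = 1_I$. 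This final equivalence is the only place where the sketch actually has content; everything else is routine assembly of citations.
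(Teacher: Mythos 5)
Your assembly of the first six bullets is exactly how the paper intends this theorem to be read: it is stated without proof as a summary, and your citations of Theorem~\ref{theorem:frob}, Theorem~\ref{theorem:admits-Frobenius}, Theorem~\ref{thm:iff-frob-condition}, Lemma~\ref{lem:equaliser} and Lemma~\ref{lem:nondegenerate-implies-preHF} line up with the results it is summarising. The one place you diverge is the last bullet, which you single out as the only point needing genuine work. In fact the displayed equation of Lemma~\ref{lem:equivalent-to-nondegenerate} is precisely $\gcoint \circ \antipode \circ \rint = 1_I$, so the equivalence of the sixth and seventh bullets is that lemma verbatim and requires no fresh argument. Moreover, your proposed re-derivation of its converse direction is the one shaky step in the sketch: the scalar $\gcoint \circ \antipode \circ \rint$ is not a priori the same scalar as $\gcoint \circ \rint$, so it does not collapse to $1_I$ merely by the normalisation $\gcoint \circ \rint = 1_I$; the paper's proof of Lemma~\ref{lem:equivalent-to-nondegenerate} obtains this identity by an explicit diagram computation combining the snake equation for $\beta$ and $\gamma$ with the integral and cointegral equations. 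If you simply cite that lemma instead of re-deriving it, the proof closes cleanly and matches the paper's.
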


We finish this section by asking how canonical this structure
is. Frobenius structure in general is non-canonical (cf. Proposition
\ref{prop:frob-structs-are-invertible-elts}). Despite this, we find that
Hopf-Frobenius structure is canonical, as follows.
\begin{lemma}\label{lem:HF-is-unique}
    Let $H$ admit a Hopf-Frobenius algebra structure. Then this
    structure is unique up to invertible scalar.
\end{lemma}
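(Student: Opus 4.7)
The plan is to reduce uniqueness of the HF structure to uniqueness of the underlying integral--cointegral pair, and then establish the latter using the integral morphism $\Isec$. Suppose $H$ carries two Hopf--Frobenius structures, both extending the fixed Hopf algebra $(H,\gmult,\gunit,\rcomult,\rcounit,\antipode)$. By Theorem~\ref{theorem:frob} together with Theorem~\ref{theorem:admits-Frobenius}, each such structure arises from (and is uniformly reconstructed from) an integral--cointegral pair $(\rint_i,\gcoint_i)$ satisfying $\gcoint_i\circ\rint_i = \id_I$: the green Frobenius form, its inverse, the red Frobenius data, and hence the green comonoid and red monoid are all determined by the explicit formulae recorded in Theorem~\ref{theorem:frob}. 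It therefore suffices to show that any two such pairs $(\rint_1,\gcoint_1)$ and $(\rint_2,\gcoint_2)$ are related by $\rint_2 = \lambda\cdot\rint_1$ and $\gcoint_2 = \lambda^{-1}\cdot\gcoint_1$ for a single invertible scalar $\lambda : I\to I$.

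Set $\lambda := \gcoint_1\circ\rint_2$ and $\mu := \gcoint_2\circ\rint_1$. The central diagrammatic identity I would establish is that, when $\Isec$ is computed with respect to the half-duality arising from structure~$1$, it collapses to the composite $\rint_1\circ\gcoint_1$. Because $\Isec$ is independent of the choice of half-dual (Lemma~\ref{lem:I-ind-of-half-dual}), this convenient choice is harmless. The argument unpacks Definition~\ref{def:integral-section}, then applies the defining cointegral equation for $\rint_1$ together with the Frobenius and Hopf axioms to simplify the resulting string diagram. Granted this identity, Lemma~\ref{lem:I-preserves-integrals} gives
\[
\rint_2 \;=\; \Isec\circ\rint_2 \;=\; \rint_1\circ\gcoint_1\circ\rint_2 \;=\; \lambda\cdot\rint_1,
\]
and the dual computation, using the half-duality from structure~$2$, gives $\gcoint_2 = \mu\cdot\gcoint_1$. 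Combining these with normalisation yields $\id_I = \gcoint_2\circ\rint_2 = \mu\lambda\cdot(\gcoint_1\circ\rint_1) = \mu\lambda$, so $\mu = \lambda^{-1}$ and both scalars are invertible.

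Substituting $\rint_2 = \lambda\rint_1$ and $\gcoint_2 = \lambda^{-1}\gcoint_1$ into the formulae of Theorem~\ref{theorem:frob}, one checks directly that every component of the second HF structure is a scalar rescaling of the corresponding component of the first: the green Frobenius form scales by $\lambda^{-1}$, its inverse (the cap) by $\lambda$, and the red Frobenius data scale dually, so that the green comonoid $(\gcomult,\gcounit)$ and red monoid $(\rmult,\runit)$ each differ from the original by an invertible scalar. This is exactly uniqueness up to invertible scalar.

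The main obstacle will be the diagrammatic identity $\Isec = \rint_1\circ\gcoint_1$ under the chosen half-duality. This is the precise sense in which, for HF algebras, the integral morphism is a projection onto a one-dimensional space of cointegrals; making this precise in the general symmetric monoidal setting, rather than appealing to linear-algebraic intuition in \FVect or \FPMod, will require careful diagrammatic manipulation of the integral equations together with the Frobenius condition established in Theorem~\ref{theorem:frob}. Once that identity is secured, the remainder of the proof is essentially normalisation and bookkeeping.
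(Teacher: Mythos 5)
Your overall strategy is the same as the paper's: reduce uniqueness of the Hopf--Frobenius structure to uniqueness, up to mutually inverse invertible scalars, of the cointegral--integral pair (the red unit and green counit of the green Hopf algebra), then propagate the rescaling through the remaining structure maps; your normalisation step showing the two scalars are mutually inverse is identical to the paper's. Where you diverge is in how the scalar relation between the two cointegrals is obtained, and this is also where your write-up has its one real gap. The paper invokes Lemma~\ref{lem:equaliser}: under the Frobenius condition the cointegral is an equaliser of the relevant pair of maps, and since any other left cointegral is a cone of that diagram it factors through the first via a \emph{unique} scalar --- no computation of $\Isec$ is needed. You instead propose to prove that $\Isec$ equals the composite of the first cointegral with the first integral and then apply Lemma~\ref{lem:I-preserves-integrals}; you flag that identity as the ``main obstacle'' and leave it unproven. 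The gap is closable with tools already in the paper rather than by fresh diagram-chasing: writing $\Lambda_i : I \to H$ and $\lambda_i : H \to I$ for your two pairs, Theorem~\ref{theorem:admits-Frobenius} supplies \emph{some} normalised pair $(\Lambda,\lambda)$ witnessing the Frobenius condition, i.e.\ $\Isec = \Lambda\circ\lambda$ with $\lambda\circ\Lambda = \id_I$; then Lemma~\ref{lem:I-preserves-integrals} gives $\Lambda_1 = \Isec\circ\Lambda_1 = \Lambda\cdot(\lambda\circ\Lambda_1)$ and dually $\lambda_1 = (\lambda_1\circ\Lambda)\cdot\lambda$, the two connecting scalars are mutually inverse by $\lambda_1\circ\Lambda_1=\id_I$, and hence $\Lambda_1\circ\lambda_1 = \Lambda\circ\lambda = \Isec$ as you wanted. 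One further point to make explicit: your claim that each HF structure is ``uniformly reconstructed'' from its pair via Theorem~\ref{theorem:frob} needs a sentence of justification, since an arbitrary Frobenius structure need not be the one that theorem produces. It holds here because the HF axiom forces the antipode into the prescribed form, so Lemma~\ref{lemma:iff-integral} guarantees the red unit and green counit genuinely are a cointegral and integral, and a Frobenius comultiplication is determined by the multiplication together with its form; the paper discharges this step via Corollary~\ref{cor:green-to-red-cup-2}.
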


\section{Examples}
\label{sec:examples}

Combined with the results of Larson and Sweedler
\cite{Larson1969An-Associative-}, Pareigis \cite{PAREIGIS1971588}, and
Lemma~\ref{lem:integrals-are-multiples},
Theorem~\ref{thm:iff-frob-condition} implies that any Hopf algebra in
\FVect is Hopf-Frobenius.  This allows the direct extension of
\cite{Duncan2016Interacting-Fro} to non-abelian group algebras, but
there are plenty of other examples.  We briefly mention some examples
which are neither commutative nor cocommutative.

\begin{example}
  Let $k$ be a field with a primitive $n^{th}$ root of unity $z$.  The
  \emph{Taft Hopf algebras} \cite{Taft2631} are a family of Hopf
  algebras in \FVect whose antipodes have order $2n$.  Generically,
  the algebra $(H,\mu,1,\Delta,\epsilon,s)$ is generated by elements $x$
  and $g$, such that $x^n = 0$, $g^n=1$, and $gx = zxg$. The coalgebra
  is defined $\Delta(x) = 1 \OX x + x \OX g$, and $\Delta(g)=g \OX g$,
  with $\epsilon(x)=0$ and $\epsilon(g)=1$. The antipode is
  $s(x)=-xg^{-1}$, $s(g)=g$, and the rest of the structure follows
  from the Hopf algebra axioms. We may see that $H$ has the basis
  $x^\alpha g^\beta$, where $0 \leq \alpha, \beta, \leq n-1$, so this
  will imply that $H$ is $n^2$ dimensional. One can calculate that the
  left integral of $H$ is
  \[
  \sum^n_{i=1} z^{-i} g^i x^{n-1}
  \]
  and the right cointegral is the functional that takes $x^{n-1}$ to 1
  and every other basis element to 0. We explicitly construct the HF
  algebra of the Taft Hopf algebra when $n=2$ in the appendix.
\end{example}

\begin{example}
  Hopf algebras which arise as the quantum enveloping algebra of Lie
  algebras are a type of quantum group. Since these are infinite
  dimensional, they cannot be Hopf-Frobenius algebras.  However their
  finite dimensional quotients will be Hopf-Frobenius.  See Kassel
  \cite{kassel2012quantum} for an example.
\end{example}

\noindent
Moving away from \FVect, we consider \catRel, the category of sets and
relations.

\begin{example}
  Let $G$ be an infinite group.  Following Hasegawa
  \cite{hasegawa2010bialgebras} we can construct its group algebra in
  \catRel.  The integral is $\{ (\star, g) \;|\; g \in G\}$ and the
  cointegral is the singleton $(1,\star)$.  The construction detailed
  in Theorem \ref{theorem:frob} recovers the expected 
  multiplication and comultiplication relations:
  \suck 
  \begin{gather*}
  \gcomult:= a \mapsto (b,c) \text{ such that } a=bc \\
  \rmult:= (a,b) \mapsto \begin{cases}
    a           &\text{ if } a=b\\
    \emptyset   &\text{ otherwise}
  \end{cases}
  \end{gather*}
\end{example}
\suck

\noindent
We look forward to discovering more exotic examples.



\section{A simpler Drinfeld double}
\label{sec:drinfeld-double}

\REM{this first para is some pretty weak sauce.  Needs improvement!}
Braided categories of modules over a Hopf algebra are widely used in
physics, where they give solutions to the Yang-Baxter equation and in
low dimensional topology, where they are used to find invariants.
However the category of modules over a Hopf algebra is braided if and
only if the Hopf algebra is \emph{quasi-triangular}.  The
\emph{Drinfeld double} \cite{drinfeld1986quantum} is a construction
that takes a Hopf algebra $H$ in \FVect, and produces a
quasi-triangular Hopf algebra $D(H)$ on the object $H \OX H^*$.  In
this section we use the self-duality of a Hopf-Frobenius algebra to
construct the canonical isomorphism $H \cong H^*$ and thus define a
simpler version of the Drinfeld double on $H\OX H$.

We will assume that \catC is a compact closed category. We denote the
green and red Hopf algebras of $H$ as $H\gconj$ and $H\rconj$
respectively.  We use the generalisation of Drinfeld's original
construction to symmetric monoidal categories, due to Chen
\cite{chen2000quantum}.




\begin{definition}\label{def:nat-iso}
Let $H$ be a HF algebra on \catC. By Proposition
\ref{prop:duals-are-unique}, we may define an isomorphism
$\rediso:H\rightarrow H^*$, with inverse $\redinv:H^* \rightarrow H$
as

\[
\hopfisfrob{red-nat-iso}
\]
\end{definition}

\begin{lemma}\label{lemma:rediso}
  The morphism \rediso is a Hopf algebra homomorphism between
  $H_{\rconj}^\sigma$ and $H_{\gconj}^*$.
\end{lemma}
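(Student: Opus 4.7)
The strategy is to unpack $\rediso$ as the mate of the red Frobenius cap under the compact closed adjunction, and then verify the homomorphism axioms by direct graphical calculation, relying on the red Frobenius law to translate between the red-Frobenius duality implicit in $\rediso$ and the compact closed duality defining $(\cdot)^*$. Since every bialgebra homomorphism between Hopf algebras automatically preserves the antipode, and since $H_\rconj^\sigma$ is a Hopf algebra (use Proposition \ref{prop:antipode-props}(3) together with an analogue of Lemma \ref{theorem:invertible-antipode} applied to the red integral Hopf algebra to see that $\antipoded$ is invertible), it suffices to show that $\rediso$ is a bialgebra homomorphism.

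The plan is to verify four equations. For multiplication, I want $\rediso \circ \rmult = \rcomult^{\!*} \circ (\rediso \OX \rediso)$. Bending the inputs of $\rmult$ through the red Frobenius cap used in the definition of $\rediso$ and invoking the red version of Lemma \ref{lemma:how-green-relates-to-dagger}, i.e.\ $(\rmult)^{\rtrans} = \rcomult$, converts the left-hand side into the red comultiplication with both outputs pulled up through the compact closed cap, which is exactly the definition of $\rcomult^{\!*}$ precomposed with $\rediso \OX \rediso$. For the comultiplication side I need $(\rediso \OX \rediso) \circ \gcomult^{\sigma} = \gmult^{\!*} \circ \rediso$; here the swap $\sigma$ present in $H_\rconj^\sigma$ is exactly what is required to reconcile the order-reversing identity $(A \OX B)^* \cong B^* \OX A^*$ of the compact closed structure with the red-Frobenius duality, and the dual relation $(\gcomult)^{\rtrans} = \gmult^{\sigma}$ delivers the equation directly.

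For the unit $\rediso \circ \runit = \rcounit^*$, I use that in the red Frobenius algebra bending $\runit$ through the red cap produces $\rcounit$ (by Lemma \ref{lem:frob-self-dual} and Definition \ref{def:frob-alt}); translating across the compact closed adjunction turns $\rcounit$ into $\rcounit^*$. For the counit $\gunit^{\!*} \circ \rediso = \gcounit$, the calculation uses the HF axiom $\antipode = \antipodeform$ from Definition \ref{def:HF-algebra}, which is precisely the bridge between $\gunit$ and $\gcounit$ via the red Frobenius cap, so that pulling $\gunit^{\!*}$ through $\rediso$ lands on $\gcounit$. These four checks exhaust the bialgebra axioms, and the antipode condition then comes for free.

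The main obstacle is not any single difficult identity but keeping the bookkeeping straight: one must track carefully which Frobenius duality is being used in each step, and exploit the fact that the HF axiom forces the antipode of $H_\gconj$ to be expressible through either Frobenius form. This axiom is what glues the red-Frobenius structure on the source of $\rediso$ to the green Hopf structure being dualised on the target, and without it the homomorphism would fail. The swap appearing in $H_\rconj^\sigma$ is not an artefact but precisely the transposition created by composing the two dualities, so once the mates of the generators are computed the homomorphism conditions emerge without additional work.
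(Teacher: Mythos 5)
Your proposal is correct and matches the paper's proof in essence: the paper likewise verifies the structure-map equations by direct graphical calculation, using Corollary \ref{cor:green-to-red-cup-1} (the diagrammatic consequence of the HF antipode axiom you cite) to bridge the red-Frobenius duality inside $\rediso$ with the compact closed duality on $H^*$, and it only writes out one of the four checks, declaring the rest similar. Your additional observations --- that the antipode condition is automatic for a bialgebra homomorphism between Hopf algebras, and that the colour-swapped analogue of Lemma \ref{lemma:how-green-relates-to-dagger} supplies the needed transposes --- are sound, though the paper proves that lemma only for the green duality, so strictly you should note that the red version follows by the symmetry of the Hopf-Frobenius axioms.
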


\begin{remark}

  The morphism $\rediso$ is the canonical isomorphism between the
  compact closed structure and the red dual structure given to us by
  the Hopf-Frobenius structure, in the sense of
  Proposition~\ref{prop:duals-are-unique}. By
  Lemma~\ref{lem:HF-is-unique}, since the Hopf-Frobenius structure is
  canonical, the red and green Frobenius structures are also
  canonical, and by extension, the red and green dual structures on
  $H$ are also canonical. Therefore, whenever $H$ admits a
  Hopf-Frobenius structure on a compact closed category, we may
  construct $\rediso$ up to a unique invertible scalar.
  
\end{remark}

\begin{definition}
  A Hopf algebra $H$ is \emph{quasi-triangular} if there exists a
  \emph{universal $R$-matrix} $R:I \rightarrow H \otimes H$ such that
\begin{itemize}
\item $R$ is invertible with respect to \gmult
\item \inlfropf{quasi-commutative}
\item \inlfropf{quasi-triangular}
\end{itemize}
\end{definition}
All cocommutative Hopf algebras are quasi-triangular, with $\rcounit
\otimes \rcounit$ as the universal $R$-matrix.  This definition is
motivated by the following theorem \cite{kassel2012quantum}.

\begin{theorem}
  The category of modules over a Hopf algebra is braided if and only
  if the Hopf algebra is quasi-triangular.
\end{theorem}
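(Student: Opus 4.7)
The plan is to establish both directions by making explicit the translation between a universal $R$-matrix on $H$ and a natural, invertible braiding on the category of $H$-modules. For the forward direction, assume $H$ carries a universal $R$-matrix, and for any two $H$-modules $M, N$ define the candidate braiding $c_{M,N} : M \otimes N \to N \otimes M$ by acting on $M \otimes N$ by $R$ via the module structures and then applying the ambient symmetry of \catC. I would verify four things in sequence: that $c_{M,N}$ is a morphism of $H$-modules, which reduces precisely to the quasi-commutativity axiom for $R$; naturality in $M$ and $N$, which is automatic because $R$ is a fixed element and the symmetry is natural; the two hexagon identities, which reduce under the usual dictionary to the two coproduct identities $(\rcomult \otimes \id{H})R = R_{13}R_{23}$ and $(\id{H} \otimes \rcomult)R = R_{13}R_{12}$, exactly the remaining quasi-triangular axioms; and invertibility of each $c_{M,N}$, which follows from the invertibility of $R$ with respect to \gmult.

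For the converse, assume the module category is braided, and define $R : I \to H \otimes H$ by applying the braiding $c_{H,H}$ of the regular representation to $\gunit \otimes \gunit$ and post-composing with the ambient symmetry. The key observation is that $c_{H,H}$ is an $(H \otimes H)$-module homomorphism, so transporting this intertwining relation to its value at the unit forces the quasi-commutativity axiom on $R$. The coproduct identities are then extracted by applying the two hexagon axioms to the regular representation tensored with itself twice, evaluated at $\gunit \otimes \gunit \otimes \gunit$; each hexagon yields one of the two required identities. Invertibility of $R$ follows from invertibility of the braiding.

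The main obstacle in both directions is the careful bookkeeping of how the $H$-action on iterated tensor products factors through the coproduct: in the forward direction this is what makes the hexagons match the $R$-matrix axioms, and in the converse direction it is what lets us extract those axioms from the hexagons. Once that translation is made precise — exactly the content of Chen's \cite{chen2000quantum} extension of Drinfeld's original theorem to symmetric monoidal categories — the rest is a sequence of rewrites using naturality of the symmetry, coassociativity of \rcomult, and the fact that module maps out of the regular representation are determined by their value at \gunit.
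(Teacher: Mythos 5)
The paper does not prove this statement at all: it is quoted as a classical result, with the proof deferred to Kassel \cite{kassel2012quantum} (and Chen \cite{chen2000quantum} for the symmetric-monoidal-category setting). Your sketch is a correct outline of exactly that standard argument --- braiding from $R$ via the module actions followed by the symmetry, hexagons matching the two coproduct identities, and the converse by evaluating $c_{H,H}$ at $\gunit\otimes\gunit$ and using that module maps out of the regular representation are determined by their value at the unit --- so it agrees with the proof the paper is implicitly relying on; the only loose phrase is calling $c_{H,H}$ an ``$(H\otimes H)$-module homomorphism,'' where what is actually used is $H$-linearity for the diagonal action \emph{together with} naturality with respect to right-multiplication maps.
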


\begin{definition}
Let $H$ be a Hopf algebra in \catC with an invertible antipode. The
\emph{Drinfeld double} of $H$, denoted $D(H) = (H \otimes H^*,
\mu,1,\Delta,\epsilon,s)$, is a Hopf algebra defined in the following
manner:
\[
\hopfisfrob{D-double-structure}
\]
\end{definition}



\begin{theorem}[Drinfeld\cite{drinfeld1986quantum,chen2000quantum}]
  $D(H)$ is quasi-triangular, with the universal $R$-matrix shown below.
  \[
  \hopfisfrob{D-double-R-matrix}
  \]
\end{theorem}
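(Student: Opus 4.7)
The plan is to verify the three defining properties of quasi-triangularity directly by diagrammatic manipulation, exploiting the fact that the $R$-matrix shown is essentially the duality cap of the compact closed structure on $H$, embedded into $D(H)\otimes D(H)=(H\otimes H^*)\otimes(H\otimes H^*)$ by placing it across the $H$-component of the first factor and the $H^*$-component of the second factor.

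First I would establish the two triangle identities $(\Delta\otimes\mathrm{id})(R)=R_{13}R_{23}$ and $(\mathrm{id}\otimes\Delta)(R)=R_{13}R_{12}$. Unfolding the comultiplication of $D(H)$ (which acts as $\Delta_H$ on the first tensor factor and as the opposite of $\Delta^{*}_H$ on the second, up to the canonical braiding), each identity unravels into an instance of the compatibility between the duality cap and the (co)multiplication of $H$, as encoded by Proposition \ref{prop:dual-functor} and the bialgebra laws of Definition \ref{def:bialgebra}. These identities are essentially graph-theoretic: splitting a single cap via $\Delta_H$ and then recombining via $\mu_{D(H)}$ is the same as laying down two caps in parallel and composing appropriately. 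I would draw these out using the pictorial calculus already in use in the paper.

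Second, invertibility of $R$ under $\mu_{D(H)}$ can be deduced as a consequence of the triangle identities together with the existence of an invertible antipode on $D(H)$: a classical consequence of the triangle identities in any Hopf algebra is that $(s\otimes\mathrm{id})(R)$ is a $\mu$-inverse of $R$, via the Hopf law. Since $s_{D(H)}$ is invertible --- because $s$ itself is, by Proposition \ref{prop:antipode-props} combined with the Hopf-Frobenius structure guaranteed by Theorem \ref{thm:iff-frob-condition} --- this inverse exists.

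Finally, for quasi-commutativity $(\sigma\circ\Delta)(x)\cdot R=R\cdot\Delta(x)$, I would verify this on elements of each tensor factor of $D(H)=H\otimes H^*$ separately. This is where the twisted multiplication of the Drinfeld double does the real work: its definition builds in precisely the adjoint (co)action of $H$ on $H^*$ that implements conjugation by the cap. I expect this to be the main obstacle, since it demands careful bookkeeping of how the twisted multiplication interacts with the cap defining $R$, together with bending wires using the compact closed structure to move between $H$ and $H^*$; the remaining steps are essentially diagrammatic consequences of compact closure and the bialgebra laws.
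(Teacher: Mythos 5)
The paper does not prove this theorem at all: it is stated as an imported result, attributed to Drinfeld and to Chen's generalisation to symmetric monoidal categories, so there is no in-paper argument to compare yours against. Judged on its own terms, your outline is the standard proof from the cited literature and the overall strategy is sound: the $R$-matrix is indeed the canonical cap $d_H : I \to H\otimes H^*$ placed across the $H$-leg of the first copy of $D(H)$ and the $H^*$-leg of the second, padded with units elsewhere; the two hexagon/triangle identities $(\Delta\otimes\mathrm{id})(R)=R_{13}R_{23}$ and $(\mathrm{id}\otimes\Delta)(R)=R_{13}R_{12}$ do reduce to the naturality of $d$ and the fact that $\gmult^*$ is by definition the transpose of $\gmult$ across the cap (Proposition~\ref{prop:dual-functor}); and invertibility via $(s\otimes\mathrm{id})(R)$ is the classical consequence of those identities. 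Two caveats. First, for the invertibility step you also need the counit conditions $(\epsilon\otimes\mathrm{id})(R)=1=(\mathrm{id}\otimes\epsilon)(R)$, without which the Hopf-law computation $\mu(s\otimes\mathrm{id})\Delta = \eta\epsilon$ does not collapse $((s\otimes\mathrm{id})R)\cdot R$ to the unit; for this particular $R$ these are one-line consequences of the snake equations, but they must be recorded. Second, and more substantially, the quasi-commutativity axiom $(\sigma\circ\Delta)(x)\cdot R = R\cdot\Delta(x)$ is exactly where the twisted multiplication of $D(H)$ earns its keep, and your proposal only names this as ``the main obstacle'' without carrying it out; verified on the two generating subalgebras $H\otimes 1$ and $1\otimes H^*$ it requires unwinding the full adjoint-action twist in $\mu_{D(H)}$, using the invertibility of the antipode, and it is not a formal consequence of compact closure and the bialgebra laws alone. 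As it stands the proposal is a correct plan with the decisive computation deferred; completing it would essentially reproduce the argument in Chen \cite{chen2000quantum}, which is presumably why the authors chose to cite rather than reprove it.
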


Our goal is to use the Hopf-Frobenius structure to get a Hopf algebra
that is isomorphic to the Drinfeld double, but is easier to do
diagrammatic reasoning with.

We will now use the Hopf-Frobenius structure to derive a Hopf algebra
isomorphic to the Drinfeld double.  Consider the composite of the map
$1\otimes\rediso$ with the multiplication of the Drinfeld double:

\begin{lemma}\label{lemma:rho-with-D-double}
  \[
  \hopfisfrob{rho-with-D-double}
  \]
\end{lemma}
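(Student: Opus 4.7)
The plan is to establish the identity by direct diagrammatic calculation, expanding the left-hand side and repeatedly using the Hopf-Frobenius structure together with the fact that $\rediso$ intertwines the compact closed dual structure with the red self-dual structure. I would start by substituting the definition of the Drinfeld double multiplication on $H \otimes H^*$ and the definition of $\rediso$ from Definition \ref{def:nat-iso} into the left-hand side. This produces a diagram built from the green multiplication, the green antipode $s$, the dualised operations $\rcomult^*$ and $\gmult^*$, two compact closed caps, and one copy of the red Frobenius cap used in the definition of $\rediso$.

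The central step is to apply Lemma \ref{lemma:rediso}: since $\rediso$ is a Hopf algebra homomorphism from $H_{\rconj}^{\sigma}$ to $H_{\gconj}^{*}$, the block $\gmult^* \circ (\rediso \otimes \rediso)$ can be replaced by $\rediso \circ \rmult^{\sigma}$, and similarly for the unit. This immediately trades one of the dual multiplications for the red multiplication on $H$ itself and removes one use of the compact closed cup, at the cost of postcomposing with $\rediso$. After this, the diagram contains only operations on $H$ (green and red multiplications, green antipode, green comultiplication) together with a single remaining $\rediso$ on the output wire, as the statement predicts.

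The remaining work is to reduce the surviving tangle of antipodes, cups and caps. Here I would use the antipode formula $\antipode = \antipodeform$ that comes from the HF structure (Theorem \ref{theorem:frob}) to rewrite the green antipode in terms of the red Frobenius cup and cap. The snake equations for both the compact closed structure and the red Frobenius structure, combined with the Frobenius law for the red algebra, then collapse the remaining dualities. Finally, I would apply the bialgebra law between $\gmult$ and $\rcomult$, together with Proposition \ref{prop:antipode-props}(2) saying that $s \colon H^{\mathrm{op}} \to H$ is a bialgebra homomorphism, to bring the diagram into the form claimed on the right-hand side.

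The main obstacle is purely the bookkeeping: the Drinfeld double multiplication already carries several antipodes, comultiplications and compact closed cups, and substituting $\rediso$ adds more red Frobenius structure on top. Keeping the correct wire orderings and ensuring that each use of the Frobenius law and bialgebra law is locally valid is where most of the care will be required; none of the individual steps is conceptually deep, but the diagram is large enough that an incautious rewrite can easily introduce or lose a braiding. The canonicity of the HF structure (Lemma \ref{lem:HF-is-unique}) reassures us that there is a unique natural target to aim for, which helps guide the reduction.
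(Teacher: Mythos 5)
Your approach matches the paper's: both expand the definitions of the Drinfeld double multiplication and of $\rediso$, invoke Lemma~\ref{lemma:rediso} to push $\rediso$ through the multiplication of the dual Hopf algebra, and use the fact that the antipode is the canonical map between the two self-dual structures (Corollary~\ref{cor:green-to-red-cup-1}, equivalently your $\antipode = \antipodeform$) to collapse the remaining cups and caps; the second statement, as in the paper, is immediate from the definition of $\redinv$. One small slip worth fixing: the block you rewrite via the homomorphism property should be $\rcomult\!{}^{*}\circ(\rediso\otimes\rediso)$ --- the \emph{multiplication} of $H_{\gconj}^{*}$ --- rather than $\gmult\!{}^{*}\circ(\rediso\otimes\rediso)$, which is a comultiplication and does not typecheck in that position.
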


\begin{definition}
  Let $H$ be a HF algebra. The \emph{red Drinfeld double}, denoted
  $D\rconj(H) = (H \otimes H,
  \mu\rconj,1\rconj,\Delta\rconj,\epsilon\rconj,s\rconj)$, is a Hopf
  algebra on the object $H \otimes H$ with structure maps
  \[
  \hopfisfrob{nice-structure-maps}\]
\end{definition}

\begin{corollary}
  $D\rconj(H)$ is a quasi-triangular Hopf algebra isomorphic to the
  Drinfeld double, with universal R-matrix
  \[
  \hopfisfrob{dag-R-matrix}
  \]
  \suck
\end{corollary}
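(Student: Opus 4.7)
The plan is to show that the map $1 \otimes \rediso : H \otimes H \to H \otimes H^*$ is a Hopf algebra isomorphism between $D\rconj(H)$ and $D(H)$, and then transfer the quasi-triangular structure along this isomorphism. Since $\rediso$ is invertible by Definition~\ref{def:nat-iso}, the candidate map $1 \otimes \rediso$ is automatically an iso of objects; what remains is compatibility with each piece of the Hopf algebra structure.

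First I would verify that $1 \otimes \rediso$ is a bialgebra homomorphism from $D\rconj(H)$ to $D(H)$. Compatibility with the multiplications is exactly what Lemma~\ref{lemma:rho-with-D-double} records. For the comultiplication, unit, and counit, the key input is Lemma~\ref{lemma:rediso}: $\rediso$ is a Hopf algebra homomorphism $H_{\rconj}^\sigma \to H_{\gconj}^*$, so in particular it intertwines $\rcounit$ with $\gunit^*$, and intertwines the opposite comultiplication on $H\rconj$ (which is $\gcomult$ read in the red direction, matching the definition of $\Delta\rconj$) with $\gmult^*$. Plugging these intertwiners into the structure maps of $D(H)$ and comparing with the definition of $D\rconj(H)$ yields the required squares; in each case the check amounts to pushing $1 \otimes \rediso$ through the defining string diagram and invoking Lemma~\ref{lemma:rediso} componentwise. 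Compatibility with antipodes is then automatic, since an antipode on a given bialgebra is unique (Proposition~\ref{prop:antipode-props}) and both sides carry one.

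Given a Hopf algebra isomorphism, quasi-triangularity transfers: if $R$ is a universal $R$-matrix for $D(H)$ in the sense of Drinfeld's theorem, then the pullback
\[
R\rconj := \bigl((1 \otimes \rediso)^{-1} \otimes (1 \otimes \rediso)^{-1}\bigr) \circ R
\]
satisfies the three axioms for $D\rconj(H)$, because each axiom is a diagram in the bialgebra structure alone and the isomorphism preserves these. This immediately yields the quasi-triangularity claim and reduces the identification of the $R$-matrix to a computation.

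The final step is to compute $R\rconj$ explicitly and check it equals the displayed dagger $R$-matrix. Drinfeld's $R$-matrix from the preceding theorem is the composition of a cap on $H \otimes H^*$ with the obvious symmetries placing the copies into the tensor factors $H \otimes H^* \otimes H \otimes H^*$. Applying $\redinv$ on each $H^*$ strand replaces those caps with the red cup of the Hopf--Frobenius structure by Definition~\ref{def:nat-iso}, which is precisely the shape of the dagger $R$-matrix. The only step I expect to be slightly delicate is bookkeeping: making sure the order of the tensor factors and the choice of $\rediso$ versus $\redinv$ on each strand line up with the explicit diagram $\hopfisfrob{dag-R-matrix}$ rather than with a twisted or inverted variant; once that is done the identification is immediate.
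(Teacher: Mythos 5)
Your proposal is correct and matches the argument the paper intends (the paper leaves the corollary unproved, but its setup --- Lemma~\ref{lemma:rho-with-D-double} for the multiplication, Lemma~\ref{lemma:rediso} for the remaining structure maps, and transport of the $R$-matrix along $1\otimes\rediso$ --- is exactly the route you take). The one point worth making explicit in a write-up is the ``bookkeeping'' step you flag at the end: verifying that applying $\redinv$ to the $H^*$ strands of Drinfeld's $R$-matrix really produces the displayed red cup, which is where Definition~\ref{def:nat-iso} and Corollary~\ref{cor:green-to-red-cup-1} do the work.
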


\section{Conclusion and further work}
\label{sec:concl-furth-work}


We have generalised the notions of interacting Frobenius algebras
\cite{Coecke:2009aa,Duncan2016Interacting-Fro} and interacting Hopf
algebras \cite{Bonchi2014aJournal} to the non-commutative case, and in
the process shown that they are rather common structures.
This work could be viewed as an extension of classical results showing
that concrete Hopf algebras over finite dimensional vector spaces are
also Frobenius algebras \cite{Larson1969An-Associative-}.  Another
perspective is that we make precise how much ambient symmetry is
required to obtain a Hopf-Frobenius algebra.  The original setting of
interacting Frobenius algebras \cite{Coecke:2009aa} was a
$\dag$-compact category, which provides a lot of duality on top of the
commutative algebras themselves.  We show that none of this structure
is necessary: all that is required is one-sided \emph{half-dual} for
the carrier object.  
The major question that remains is to pin down exactly when the
Frobenius condition holds; as Lemma~\ref{lem:integrals-are-multiples}
shows, this is tightly related to the existence of integrals.  Compact
closure does not suffice to guarantee this: in \FPMod there are Hopf
algebras which are not Frobenius.  \REM{Unimodularity something
  something.}  

While we have established that Hopf algebras are frequently
Hopf-Frobenius, the resulting Frobenius algebras need not be well
behaved (commutative, dagger, special) as in the original quantum
setting \cite{PavlovicD:MSCS08}.  It remains to investigate what
Frobenius structures arise from ``interesting'' Hopf algebras, and
whether they have any application in the categorical quantum mechanics
programme, or conversely, how HF algebras may be applied in the study
of quantum groups.  Weaker structures such as the ill-named
co-Frobenius algebras or the stateful resource calculus of Bonchi et al
\cite{Bonchi:2019:DAL:3302515.3290338} perhaps offer an alternative to
the nonstandard approach \cite{Gogioso2016Infinite-dimens} to study infinite
dimensional systems. Beyond this, natural generalisations to the
braided or planar cases suggest themselves, although this will push
diagrammatic reasoning to its limits.

Our new Drinfeld double construction suggests that HF algebras could
find applications in topological quantum computation, particularly for
error correcting codes, an area where the \zxcalculus is already used
\cite{Beaudrap2017The-ZX-calculus}.  The smallest non-abelian group is
$S_3$, whose group algebra fits in 3 qubits with room to spare.










\small
\bibliography{hopffrob}

\clearpage
\normalsize



\appendix

\section{Proofs omitted from the main body of the paper}
\label{sec:proofs-omitted-from}

\begin{prevlemma}{theorem:invertible-antipode}
  Let $(H, \rint, \gcoint)$ be an integral Hopf algebra. Then the
  following map is the inverse of the antipode.
\[
\tikzfig{hopf-is-frob/antipode-inverse}
\]
\end{prevlemma}

\begin{proof}
  From the definition of $\antipode^{-1}$, we see that
  \[
  \tikzfig{hopf-is-frob/antipode-inverse-proof-1}
  \]
  This implies that $H^\sigma$ is a Hopf algebra with $\antipode^{-1}$
  as the antipode, and it follows from Proposition
  \ref{prop:antipode-props} that the antipode of $H^\sigma$ is the
  inverse of the antipode of $H$. However, for the sake of clarity we
  will replicate the proof. We show that
  $\antipode^{-1} \circ \antipode = 1$ as follows
  \[
  \tikzfig{hopf-is-frob/antipode-inverse-proof-5}
  \]

  It follows from this that 
  \[
  \tikzfig{hopf-is-frob/antipode-inverse-proof-5b}
  \]

  By a similar argument, $\antipode^{-1} \circ \antipode = 1$.

\end{proof}

\begin{lemma}\label{lemma:alternate-bialgebra}
\[
\inlfropf{alt-bialgebra-rule}
\]
\end{lemma}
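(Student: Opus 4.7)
The plan is to prove the alternate bialgebra rule by transporting the ordinary bialgebra equation of $(H,\gmult,\gunit,\rcomult,\rcounit)$ across the self-duality $(\cdot)\gtrans$ supplied by the green Frobenius structure. Because $H$ admits pre-HF algebra structure, Lemma \ref{lem:frob-self-dual} makes $H$ self-dual, so by Proposition \ref{prop:dual-functor} we obtain a strong monoidal functor $(\cdot)\gtrans : \catC\op \to \catC$. The key inputs are Lemma \ref{lemma:how-green-relates-to-dagger}, which identifies $\gmult\gtrans = \gcomult$ and $\rcomult\gtrans$ with the flipped red multiplication, and the fact that the Frobenius structure is computed directly from $\beta$ and $\bar\beta$ via Theorem \ref{theorem:frob}.

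First, I would write down the bialgebra equation enjoyed by the original Hopf algebra, namely that $\rcomult$ is a monoid homomorphism $(H,\gmult,\gunit) \to (H\otimes H,\gmult\otimes\gmult,\gunit\otimes\gunit)$. Second, I would apply $(\cdot)\gtrans$ to this equation; since the functor is contravariant and strong monoidal, the image is an equation between morphisms on $H\otimes H$ built from $\gmult\gtrans$, $\rcomult\gtrans$, and the natural isomorphism $(H\otimes H)\gtrans \iso H\gtrans\otimes H\gtrans$. Third, using Lemma \ref{lemma:how-green-relates-to-dagger} to replace $\gmult\gtrans$ with $\gcomult$ and $\rcomult\gtrans$ with $\sigma\circ \rmult$, the resulting equation reduces, after absorbing the symmetries, to the alternate bialgebra rule relating $\rmult$ and $\gcomult$ that we want. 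The remaining bialgebra conditions involving $\runit$ and $\gcounit$ follow analogously by dualising the copy, cocopy, and scalar laws.

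The main obstacle is the careful bookkeeping of the symmetry $\sigma$: because $(\cdot)\gtrans$ reverses the order of tensor factors, an extra swap appears on one side of every equation, and one has to verify that this matches precisely the swap that already sits inside the standard bialgebra law $\inltf{bialg-lhs} = \inltf{bialg-rhs}$. If the clean duality argument gets bogged down in diagrammatic gymnastics, the fall-back plan is a direct calculation: substitute the explicit definitions of $\rmult$ and $\gcomult$ from \hopfisfrob{def-of-new-operations-2} into both sides, then simplify using associativity/coassociativity, the bialgebra rule for $(\gmult,\rcomult)$, the Hopf law, Proposition \ref{prop:antipode-props}(2) stating that $\antipode$ is an antihomomorphism, and the integral equations of the defining diagram \tikzfig{hopf-is-frob/inteq}. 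This is more tedious but purely mechanical, and serves as a useful sanity check on the conceptual proof.
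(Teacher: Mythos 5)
Your proposal misreads what the lemma is about, and as a result the argument is circular. Lemma~\ref{lemma:alternate-bialgebra} is an identity satisfied by an \emph{arbitrary} Hopf algebra $(H,\gmult,\gunit,\rcomult,\rcounit,\antipode)$: it involves only the original multiplication \gmult, the original comultiplication \rcomult and the antipode, and carries no hypothesis about integrals, half duals, or (pre-)HF structure. The paper's proof is a two-step diagram rewrite: apply the bialgebra law, then the Hopf law. It has to be this elementary because the lemma is consumed upstream of everything you invoke: it is used in Lemma~\ref{lem:I-preserves-integrals} and Lemma~\ref{lem:equivalent-to-nondegenerate}, which feed Theorem~\ref{theorem:frob}, which is what first produces the red monoid \rmult, the green comonoid \gcomult, and the self-duality $(\cdot)\gtrans$; and Lemma~\ref{lemma:how-green-relates-to-dagger} is proved still later, inside the proof of Theorem~\ref{thm:iff-frob-condition}. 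So transporting the bialgebra equation across $(\cdot)\gtrans$ and appealing to Lemma~\ref{lemma:how-green-relates-to-dagger} presupposes exactly the structure whose construction depends on the lemma you are trying to prove.

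Your fall-back plan has the same defect in its setup --- it substitutes the definitions of \rmult and \gcomult, which do not yet exist at this point in the development --- but it does name the two ingredients that actually suffice: the bialgebra rule for $(\gmult,\rcomult)$ and the Hopf law. Strip away everything else and perform that two-step rewrite directly on the stated diagram, with no duality functor, no integrals, and no appeal to the antihomomorphism property, and you recover the paper's proof.
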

\begin{proof}
  Observe that
\[
\inlfropf{alt-bialgebra-proof-1}
\]
where $(1.)$ comes from the bialgebra rule and $(2.)$ comes from the
Hopf law.
\end{proof}

\begin{prevlemma}{lem:equivalent-to-nondegenerate}
    Let $(H, \rint, \gcoint)$ be an integral Hopf algebra. $\gamma$ is
    a right inverse for $\beta$ if and only if
    \[
    \inltf{hopf-is-frob/equivalent-to-nondeg}\]
    
  \end{prevlemma}
  \begin{proof}
    Suppose that $H$ is nondegenerate. Then
    \[
    \hopfisfrob{eq-to-nondeg-proof-4}
    \]
    
    Consider the converse. Then we may characterise the unit as follows:
    \[
    \hopfisfrob{eq-to-nondeg-proof-1}
    \]
  This implies that
    \[
    \hopfisfrob{eq-to-nondeg-proof-2}
    \]
    This then allows us to show the following, where (1.) comes from
    Lemma \ref{lemma:alternate-bialgebra}, and $(2.)$ is due to the
    definition of cointegrals.
    \[
    \hopfisfrob{eq-to-nondeg-proof-3}
    \]
    Hence, $H$ is nondegenerate, and we have our result.

  \end{proof}

\begin{prevlemma}{lem:nondegenerate-implies-preHF}
  Let $(H, \rint, \gcoint)$ be an integral Hopf
  algebra. $H$ is nondegenerate if and only if $\beta$ is a Frobenius
  form for $(H,\gmult,\gunit)$, or equivalently, if and only if
  $\gamma'$ is a Frobenius form for $(H,\rcomult,\rcounit)$. Hence, if
  $H$ is nondegenerate, then $H$ admits a pre-HF algebra structure.
\end{prevlemma}
\begin{proof}
  If $H$ is nondegenerate, then the conditions of Definition~\ref{def:frob-alt} are
  satisfied.

  Conversely, suppose that $\beta$ is a Frobenius form;
  then there exists some $\bar\beta$ such that 
  \[
  \tikzfig{hopf-is-frob/green-cup-inverse}
  \]
  Appealing to Lemma~\ref{theorem:invertible-antipode} we have
  \[
  \tikzfig{hopf-is-frob/gamma-is-cap}
  \]
  hence, $\gamma$ is the right inverse of $\beta$, and $H$ is
  nondegenerate. The proof for $\gamma'$ is similar.
\end{proof}

\begin{lemma}\label{lem:I-ind-of-half-dual}
  When $H$ has two half dual structures,
  $\hopfisfrob{triangle}, \hopfisfrob{cotriangle}$ and
  $\hopfisfrob{square-cap}, \hopfisfrob{square-cup}$, then the
  integral morphisms coincide.
\end{lemma}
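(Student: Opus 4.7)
The plan is a direct diagrammatic argument. Write $(d_1,e_1) = (\hopfisfrob{triangle},\hopfisfrob{cotriangle})$ and $(d_2,e_2) = (\hopfisfrob{square-cap},\hopfisfrob{square-cup})$ for the two half-dual structures and let $\Isec_1, \Isec_2$ be the corresponding integral morphisms obtained by unfolding Definition~\ref{def:integral-section}. Starting from the diagram for $\Isec_1$, I would insert the second half-dual's snake identity
\[
\inlfropf{left-snake} \;=\; \inltf{idA}
\]
at a carefully chosen wire, producing a diagram containing both pairs $(d_1,e_1)$ and $(d_2,e_2)$. The idea is then to slide the first pair across the Hopf-algebraic operations until it appears in a configuration where its own snake identity removes it, leaving behind precisely the diagram defining $\Isec_2$.

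The intermediate Hopf-algebraic manipulations should use copying through the bialgebra rule, the fact that the antipode is a bialgebra homomorphism $\Hop \to H$ (Proposition~\ref{prop:antipode-props}), and the Hopf law itself. In particular, at the points where one would naturally want to use the ``other'' snake — the one a half-dual does not provide — I plan to exploit the trick from Lemma~\ref{theorem:invertible-antipode}, which shows how the antipode compensates for that missing snake: inserting a cancelling pair of antipodes and rerouting via the cointegral/integral should effectively reverse the half-snake.

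The main obstacle is precisely the asymmetry of half-duals: the usual uniqueness-of-duals argument (invoked in Lemma~\ref{prop:duals-are-unique}) relies on both snake equations and does not transfer verbatim. If the direct diagrammatic slide above turns out to be too delicate, my fallback is to give an intrinsic characterization of $\Isec$ that does not mention the half-dual at all. Concretely, by Lemma~\ref{lem:I-preserves-integrals} the morphism $\Isec$ sends every point $p : I \to H$ to a left cointegral and fixes all left cointegrals; it is therefore the idempotent projector onto the subobject of left cointegrals along a canonical complement determined by $H$ alone. Showing that $\Isec_1$ and $\Isec_2$ both satisfy this characterization — the ``fixes cointegrals'' half is immediate from Lemma~\ref{lem:I-preserves-integrals}, and the ``same kernel'' half is what the Hopf-algebraic slide above is really computing — would give $\Isec_1 = \Isec_2$ without the need to match the two diagrams on the nose.
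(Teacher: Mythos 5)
Your opening move --- insert one half-dual's snake identity into the diagram for $\Isec$ built from the other --- is the right one, and it is essentially what the paper's one-line diagrammatic proof does. But the rest of the plan misdiagnoses where the difficulty lies and then reaches for tools that are not available. The structural fact you are missing is that in Definition~\ref{def:integral-section} the half-dual wire runs \emph{directly} from the cap to the cup without passing through any Hopf structure (it cannot do otherwise, since $H^*$ carries no algebraic structure). Hence $\Isec$ is a generalised partial trace of a fixed, half-dual-independent morphism, and for such a trace the two one-sided snake equations --- one per structure, each in exactly the orientation Definition~\ref{def:half-dual} supplies --- already suffice: expand the $H$-wire leaving the second structure's cap using the first structure's snake; the second structure's cap and cup are then wrapped around the output wire in precisely the configuration of \emph{its own} assumed snake, and cancelling it leaves the diagram for $\Isec$ built from the first structure. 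No bialgebra rule, no Hopf law, no antipode manipulation, and no ``sliding across the Hopf operations'' is needed; the entire computation happens on the boundary wires, and the ``missing'' snakes of the two half-duals are never required.

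This matters because the compensating device you propose --- the trick of Lemma~\ref{theorem:invertible-antipode}, rerouting through the cointegral and integral --- is not available here: that lemma presupposes an integral Hopf algebra $(H,\rint,\gcoint)$, whereas Lemma~\ref{lem:I-ind-of-half-dual} assumes only a Hopf algebra with a half dual, and $\Isec$ is precisely the tool the paper later uses to decide whether such integrals exist at all (Lemma~\ref{lem:equaliser}). Invoking them would be circular. The fallback is also insufficient as stated: a bare symmetric monoidal category has no ``subobject of left cointegrals with a canonical complement'', and even in \FVect an idempotent is not determined by fixing cointegrals and landing in them --- two such projectors with different kernels would both qualify. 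You concede that the ``same kernel'' half is the real content, so the fallback does not remove the need for the diagrammatic argument; fortunately, as above, that argument is a two-step snake manipulation rather than a Hopf-algebraic slide.
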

  \begin{proof}
    \[
    \hopfisfrob{Integral-sections-coincide}
    \]
  \end{proof}

\begin{prevlemma}{lem:I-preserves-integrals}
  Given a point $p:I \rightarrow H$, and copoint $q:H \rightarrow I$,
  the morphism $\Isec \circ p$ is a left cointegral, and
  $q \circ \Isec$ is a right integral. In addition, $p$ is a left
  cointegral if and only if $\Isec \circ p = p$, and $q$ is a right
  integral if and only if $q \circ \Isec = q$.
\end{prevlemma}

\begin{proof}
  Out goal is to show that, for all points $p$,
  \[\tikzfig{hopf-is-frob/half-dual-section-proof-3}\]
  If we are able to prove the following, then the result will follow.
  \[\tikzfig{hopf-is-frob/half-dual-section-proof-2}\]
  As such, we may begin by composing \Isec with \gmult.
  
  \[\tikzfig{hopf-is-frob/half-dual-section-proof-1}\]
  
  Where $(1.)$ comes from Lemma \ref{lemma:alternate-bialgebra}, (2.)
  comes from the presence of half duals. The antipode is a bialgebra
  homomorphism $\Hop \rightarrow H$, by Proposition
  \ref{prop:antipode-props}, which gives us (3.). Associativity gives
  us (4.), and (5.) is due to the presence of half duals. Hence, we
  have proved our result.

  Out result also tells us that if $\Isec \circ p = p$, then $p$ is a
  left cointegral. For the converse, let
  \rint be a left cointegral. We then get
  \[
  \hopfisfrob{projection-identity-on-integrals}
  \]
  The proof for right integrals is similar.
\end{proof}

\begin{prevlemma}{lem:equaliser}
    Let the object $H$ have a right half dual $H^*$, where $H$ is a
    Hopf algebra. $H$ fulfills the Frobenius condition if and only if
    there is an equaliser \rint of
    \[
    \inlfropf{equaliser-statement}
    \]
    if and only if there is a coequaliser \gcoint of
    \[
    \inlfropf{equaliser-statement-2}
    \]
\end{prevlemma}

\begin{proof}
  Suppose that $H$ fulfills the Frobenius condition. Then
  $(H,\rint, \gcoint)$ is an integral Hopf algebra
  by Theorem \ref{theorem:frob}, so
  \[
    \inlfropf{equaliser-proof-1}
  \]
  We shall actually prove that \rint is a split equaliser. To do so,
  we need to find a retract of \rint and
  \inlfropf{equaliser-proof-4}. The Frobenius condition tells us that
  \gcoint is the retract of \rint, and we can easily calculate that
  the morphism \inlfropf{equaliser-proof-2} is a retract of
  \inlfropf{equaliser-proof-4}. To show that \rint must be a split
  equaliser, all we need to show now is
  \[
    \inlfropf{equaliser-proof-3}
  \]
  but this follows immediately from the assumption that the Frobenius
  condition is satisfied. Thus, we have one direction. Showing that
  \gcoint is a split coequaliser follows dually.

  For the other direction, note that by Lemma
  \ref{lem:I-preserves-integrals} we have  
    \[
    \inlfropf{equaliser-proof-5}
    \]
  where \Isec is the integral morphism. Thus, \Isec is a cone of the
  appropriate diagram. We are assuming that \rint is
  an equaliser, so there is a unique
  morphism $\gcoint:H \rightarrow I$ such that
    \[
    \inlfropf{equaliser-proof-3}
    \]
    Also, since \rint is a cointegral, by
    Lemma~\ref{lem:I-preserves-integrals} we get that
    $\rint = \mathcal{I}\circ \rint = \rint \circ \gcoint \circ
    \rint$. Since \rint is an equaliser, $\gcoint \circ \rint =
    1_I$. Hence, the Frobenius condition is satisfied. It is clear
    that if we assume that we have a coequaliser, \gcoint, this will
    also imply the Frobenius condition by duality.

\end{proof}

\begin{prevlemma}{lem:integrals-are-multiples}
  Let $(H,\rint,\gcoint)$ be an integral Hopf algebra and suppose that
  $H$ has enough points. If every left cointegral (right integral) is
  a scalar multiple of \runit (resp. \gcounit) then $H$ fulfills the
  Frobenius condition
\end{prevlemma}
\begin{proof}

  If we can show that
  \[
    \hopfisfrob{scalar-multiple-proof-4}
  \]
  then, since $H$ has enough points, the result will follow. By Lemma
  \ref{lem:I-preserves-integrals}, for all points $a$, $\Isec \circ a$
  is a cointegral. Then, by hypothesis there exists a scalar
  $k:I \rightarrow I$ such that $\Isec \circ a = \rint \OX k$
  \[
    \hopfisfrob{scalar-multiple-proof-2}
  \]
  Hence, if we can show that $\gcoint \circ a = k$, we will have our
  result. Observe that, since \gcoint is an integral,
  $\gcoint\circ\Isec=\gcoint$, so we get the following.
  \[
    \hopfisfrob{scalar-multiple-proof-3}
  \]
  and the result follows.
  
\end{proof}









\begin{prevtheorem}{thm:iff-frob-condition}
  Let $H$ be a Hopf algebra such that the object $H$ has some weak
  right dual $H^*$. Then $H$ admits a Hopf-Frobenius algebra structure if and only
  if $H$ fulfills the Frobenius condition.
\end{prevtheorem}
\begin{proof}
  If $H$ is a Hopf-Frobenius algebra, then it admits a Frobenius
  algebra, and therefore, by Theorem \ref{theorem:admits-Frobenius}, it
  fulfills the Frobenius condition.

  Consider the converse. In what follows, we will prove the Theorem by
  first proving some intermediary lemmas. If $H$ fulfills the
  Frobenius condition, then this is equivalent, by Theorem
  \ref{theorem:frob}, to $H$ admitting a pre-HF structure such that
  $(H,\runit,\gcounit)$ is an integral Hopf algebra. We begin by
  proving that $\antipode=\antipodeform$.

      

\begin{lemma}\label{lemma:iff-integral}
  Let $H$ admit a pre-HF algebra structure; $(H,\runit,\gcounit)$ is an integral
  Hopf algebra if and only if $\antipode=\antipodeform$.
\end{lemma}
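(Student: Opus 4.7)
The plan is to prove both implications by explicit diagrammatic manipulation, using in one direction the explicit form for $\antipode^{-1}$ provided by Lemma~\ref{theorem:invertible-antipode}, and in the other direction the Hopf law together with the green Frobenius structure.

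For the forward direction, suppose $(H,\runit,\gcounit)$ is an integral Hopf algebra. Lemma~\ref{theorem:invertible-antipode} then expresses $\antipode^{-1}$, and hence $\antipode$ itself, as an explicit diagram built from $\runit$, $\gcounit$, $\gmult$ and $\rcomult$. On the other hand, by Theorem~\ref{theorem:frob} the green Frobenius cup and cap that appear inside $\antipodeform$ are themselves built from $\runit$, $\gcounit$, $\gmult$ and $\rcomult$. I would expand $\antipodeform$ by substituting these formulas and then simplify using associativity, coassociativity, and the bialgebra law; after rearrangement this should rewrite to exactly the expression for $\antipode$ coming from Lemma~\ref{theorem:invertible-antipode}, giving $\antipode = \antipodeform$.

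For the converse direction, assume $\antipode = \antipodeform$. I need to verify three properties: (i) $\runit$ is a left cointegral, (ii) $\gcounit$ is a right integral, and (iii) $\gcounit \circ \runit = \id{I}$. Conditions (i) and (ii) are obtained by starting from the Hopf law $\gmult \circ (\antipodeform \otimes \id{H}) \circ \rcomult = \gunit \circ \rcounit$, substituted from $\antipode = \antipodeform$, and then evaluating at $\runit$ (respectively, post-composing with $\gcounit$). The green Frobenius law inside $\antipodeform$ allows the cup and cap pairs to cancel against pieces coming from the bialgebra structure, leaving precisely the defining equation of a cointegral (respectively, an integral). For (iii) I would compute $\gcounit \circ \antipodeform \circ \runit$ directly; the defining shape of $\antipodeform$, together with the Frobenius unit and counit laws, should cause this composite to collapse to the identity scalar.

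The main obstacle will be the backward direction, particularly (iii) and the cancellation steps for (i) and (ii). Both hinge on the diagrammatic interplay between the Hopf law and the green Frobenius law. The key technical lever is the non-degeneracy of the green Frobenius form (Lemma~\ref{lem:equivalent-to-nondegenerate}), which makes the cup and cap into a genuine duality and thus enables one to strip off the Frobenius data from $\antipodeform$ and expose the underlying integral identities.
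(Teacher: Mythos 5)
Your forward direction is essentially the paper's: the paper simply cites Lemma~\ref{theorem:invertible-antipode}, and your expansion of $\antipodeform$ amounts to the same computation. (Note only that you do not need Theorem~\ref{theorem:frob} to identify the cup and cap of the \emph{given} pre-HF structure with $\beta$ and its inverse --- the cup of any Frobenius algebra extending $(\gmult,\gunit,\gcounit)$ is forced to be $\gcounit\circ\gmult$ by Lemma~\ref{lem:frob-self-dual}, and the cap is then determined as its unique inverse.)

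The converse is where the gaps are. First, the (co)integral axioms are equations between \emph{endomorphisms} of $H$: that $\runit$ is a left cointegral says that multiplication by $\runit$ equals $\runit\circ\rcounit$ as a map $H\to H$. ``Evaluating the Hopf law at $\runit$'' produces an equation between points $I\to H$, and ``post-composing with $\gcounit$'' an equation between copoints $H\to I$; neither has the right type to yield the integral axioms, and you cannot pass from pointwise to global equations here because no enough-points hypothesis is available (the paper invokes that assumption only for Lemma~\ref{lem:integrals-are-multiples}). The paper's actual lever is different: $\antipode=\antipodeform$ is equivalent to $\antipode^{-1}=\antipodeiform$, and since the antipode is a bialgebra homomorphism $\Hop\to H$ (Proposition~\ref{prop:antipode-props}), applying this to $\antipodeiform$ yields a closed-form re-expression of $\runit$ itself, which is then substituted into the product of an arbitrary input with $\runit$ to verify the cointegral equation globally. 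Second, for (iii) you compute $\gcounit\circ\antipode\circ\runit$, but the axiom of an integral Hopf algebra is $\gcounit\circ\runit=1_I$; these are genuinely different conditions (the paper lists them as separate items in its summary theorem), so even if your composite collapses to $1_I$ you still owe the step connecting the two --- in the paper this falls out of the expressions for $\runit$ already derived. Finally, Lemma~\ref{lem:equivalent-to-nondegenerate} presupposes an integral Hopf algebra, so invoking it as the ``key technical lever'' in the direction where the integral structure is what you are trying to establish is circular; the nondegeneracy you actually need is already supplied by the given green Frobenius algebra.
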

\begin{proof}

  The implication in one direction follows from
  Lemma~\ref{theorem:invertible-antipode}.
  Suppose the converse. Note that $\antipode = \antipodeform$ is equivalent
  to $\antipode^{-1} = \antipodeiform$. We use the fact that the
  antipode is a bialgebra homomorphism to get the following.
  
  \[ \hopfisfrob{antipode-inverse-proof-2} \]
  
  It follows from this that that we may express $\runit$ similarly.
  
  \[ \hopfisfrob{antipode-inverse-proof-3} \]
  
  This allows us to show that $\runit$ is a left cointegral.
  
  \[ \hopfisfrob{antipode-inverse-proof-4} \]
  
  The proof that $\gcounit$ is a right integral is similar. We only
  need to show that $\gcounit \circ \runit = 1_I$, but this follows
  from above.
  \[ \hopfisfrob{antipode-inverse-proof-4b} \]
  
\end{proof}

It follows immediately from this Lemma that when $H$ fulfills the
Frobenius condition, the antipode is the canonical isomorphism that
maps from one dual structure to the other, in the sense of Proposition
\ref{prop:duals-are-unique}. We record this fact as a Corollary.
\begin{corollary}\label{cor:green-to-red-cup-1}
  Let $H$ admit a pre-HF algebra structure, such that $(H,\runit,\gcounit)$ is an
  integral Hopf algebra. Then
  \[ \hopfisfrob{green-to-red-cup-1} \]
\end{corollary}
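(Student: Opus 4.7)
The plan is to deduce this corollary directly from Lemma \ref{lemma:iff-integral}, which gives the explicit formula $\antipode = \antipodeform$ under the standing hypothesis that $(H,\runit,\gcounit)$ is an integral Hopf algebra on top of a pre-HF structure. Proposition \ref{prop:duals-are-unique} tells us that between any two dual structures on an object there is a canonical isomorphism built from the caps and cups of the two duals. Here $H$ carries two self-dualities coming from the green and red Frobenius algebras respectively, and the content of the corollary is to identify that canonical comparison map with the antipode (in the specific orientation drawn in the figure \texttt{green-to-red-cup-1}).

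First I would write down the canonical isomorphism in the sense of Proposition \ref{prop:duals-are-unique}: compose the green cap with the red cup to get a morphism built from $\beta$, $\gamma$, $\beta'$ and $\gamma'$ of the form exhibited in \texttt{green-to-red-cup-1}. Using the explicit descriptions of these cups and caps given right after Theorem \ref{theorem:frob} (in terms of $\gmult$, $\rint$, $\gcoint$ and the half-duality morphisms), this composite unfolds into exactly the diagram for $\antipodeform$.

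Then I would invoke Lemma \ref{lemma:iff-integral} to replace $\antipodeform$ by $\antipode$. The hypothesis of the corollary is precisely the hypothesis of that lemma, so the substitution is immediate and produces the right-hand side of the claimed equation. The diagrammatic manipulation itself is essentially bookkeeping using the snake identities for the half duals, associativity of \gmult, and the integral/cointegral axioms, all of which are already in play in the proof of Lemma \ref{lemma:iff-integral}.

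The main obstacle, such as it is, is purely notational: one must keep track of which Frobenius form is being used where (\textbf{green} vs \textbf{red}) and which orientation of the antipode appears in the figure, because the statement has the flavor of a diagram chase along several self-dualities at once. Once the pictures are lined up and each piece is expanded via Theorem \ref{theorem:frob}, the identity collapses to an instance of $\antipode = \antipodeform$, which is exactly what Lemma \ref{lemma:iff-integral} supplies. Hence the proof is essentially a one-line appeal to the preceding lemma once the canonical comparison map has been identified with $\antipodeform$.
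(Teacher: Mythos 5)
Your proposal is correct and follows the paper's own route: the paper likewise presents this corollary as an immediate consequence of Lemma~\ref{lemma:iff-integral}, observing that $\antipodeform$ is precisely the canonical comparison isomorphism between the green and red self-dual structures in the sense of Proposition~\ref{prop:duals-are-unique}. The only difference is that the paper does not even spell out the unfolding of the cups and caps that you describe, treating the identification as immediate.
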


To prove the Theorem, we must show that
$(H,\rmult,\runit,\gcomult,\gcounit,\antipoded)$ forms a Hopf algebra,
where $\antipoded =\daggerantipode$. We will accomplish this by showing
first that $(H,\rmult,\runit,\gcomult,\gcounit)$ forms a bialgebra,
and then that $\antipoded$ is the appropriate antipode. Recall that
the dual of a Hopf algebra is a Hopf algebra, in the sense
of Definition \ref{def:dual-hopf-alg}. By using the dual structure of
the green Frobenius algebra, we get the following:

\begin{lemma}\label{lemma:how-green-relates-to-dagger}
  Let $H$ admit a pre-HF algebra structure such that $(H,\runit,\gcounit)$ is
  an integral Hopf algebra, and let $(\cdot)\gtrans$ be the duality
  defined by the green Frobenius algebra (cf.\!
  Lemma~\ref{lem:frob-self-dual}).  Then:
  \suck
  \[
  \left( \inltf{green-mult}\!\!\right)\gtrans =
  \inltf{green-comult}
  \qquad\qquad
  \left( \inltf{green-unit}\!\!\right)\gtrans =
  \inltf{green-counit}
  \qquad\qquad
  \left( \inltf{red-comult}\!\!\right)\gtrans =
  \inltf{red-mult-swap}
  \qquad\qquad
  \left( \inltf{red-counit}\!\!\right)\gtrans =
  \inltf{red-unit}
  \]
  \suck
\end{lemma}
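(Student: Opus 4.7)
The plan is to handle the four equations in two groups. The first two, concerning the green operations, follow almost directly from the definition of the green Frobenius cup and cap via Lemma~\ref{lem:frob-self-dual}: bending the green multiplication around these yields the green comultiplication by the Frobenius law, and bending the green unit yields the green counit by the unit axiom together with a snake. These are routine calculations valid in any Frobenius algebra and do not involve the Hopf structure at all.

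The interesting content lies in the two red equations. For the green-dual of the red comultiplication, I would first unfold the definition of the duality $(\cdot)\gtrans$ to exhibit it as the red comultiplication with its two outputs bent upward by green caps and its input bent downward by a green cup. The idea is then to use Corollary~\ref{cor:green-to-red-cup-1}, which expresses the relationship between the green and red cup/cap pairs via the antipode, to rewrite the diagram so that it uses only the red Frobenius cup and cap, at the cost of introducing antipodes on some external wires. Once this substitution has been performed, what remains is essentially a red-Frobenius-dual of the red comultiplication, which by the Frobenius law for the red algebra collapses to the red multiplication, modulo the surviving antipodes and a possible permutation.

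To see that the result is precisely the red multiplication composed with a swap, I would invoke Proposition~\ref{prop:antipode-props}: the antipode is a bialgebra homomorphism $\Hop \to H$, so commuting it past the red multiplication reverses the order of its arguments and propagates antipodes to the inputs. The resulting antipodes can then be eliminated using the Hopf law, or equivalently by appealing to the explicit inverse antipode provided by Lemma~\ref{theorem:invertible-antipode}. The identity concerning the red counit is the nullary analogue of the same calculation and requires no new ingredients once the multiplicative case is settled.

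The main obstacle is purely bookkeeping: one must carefully track which wires acquire antipodes when converting green caps and cups into red ones, and verify that the accumulated antipodes and transpositions combine to produce exactly the swap $\sigma$ in the statement. Working through Corollary~\ref{cor:green-to-red-cup-1} rather than expanding the definitions back down to the integral formulae keeps the calculation compact, and the anti-multiplicativity of the antipode is what ultimately accounts for the $\sigma$.
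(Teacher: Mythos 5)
Your handling of the first two identities and your identification of Corollary~\ref{cor:green-to-red-cup-1} as the bridge between the green and red dual structures both agree with the paper. The gap is in how you finish the third identity. You propose to remove the leftover antipodes by commuting them past the red multiplication, citing Proposition~\ref{prop:antipode-props}; but that proposition only says that $\antipode$ is a bialgebra homomorphism $\Hop \to H$ for the \emph{green} Hopf algebra, \ie it governs the interaction of $\antipode$ with $\gmult$ and $\rcomult$. It says nothing about $\rmult$, which is a different monoid, obtained by dualising $\rcomult$ along the red Frobenius form. The compatibility of (a transpose of) the antipode with the red monoid is precisely what the enclosing Theorem~\ref{thm:iff-frob-condition} is in the middle of establishing, so assuming an anti-multiplicativity law for $\antipode$ against $\rmult$ at this point is circular. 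Relatedly, your closing claim that ``the anti-multiplicativity of the antipode is what ultimately accounts for the $\sigma$'' misattributes the swap: the $\sigma$ in $(\rcomult)\gtrans = \rmult\circ\sigma$ is the bookkeeping swap coming from the convention $(A\otimes B)^*\cong B^*\otimes A^*$, which is exactly why the main proof then identifies $(H,\rmult,\runit,\gcomult,\gcounit)$ with $(H\gtrans)^\sigma$ so that the two swaps cancel.

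The paper's calculation needs none of this machinery. After Corollary~\ref{cor:green-to-red-cup-1} is used to trade the green cups and caps for red ones together with antipodes, the remaining steps are unlabelled elementary ones (snake equations and the fact that, by construction, the red-Frobenius transpose of $\rcomult$ is the red multiplication up to the swap); the antipodes are disposed of by meeting their inverses, not by being pushed through $\rmult$ via the Hopf law. For the fourth identity the paper also avoids computation: $(\rcounit)\gtrans$ is a unit for the monoid $(\rcomult)\gtrans=\rmult\circ\sigma$, and units of a monoid are unique, so it equals $\runit$; your ``nullary analogue of the same calculation'' would work but does more than is needed. To repair your argument, replace the appeal to Proposition~\ref{prop:antipode-props} with a direct cancellation of $\antipode$ against $\antipode^{-1}$ after the cup/cap substitution, and let the $\sigma$ come from the duality convention rather than from the antipode.
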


\begin{proof}
  The first two statements are clear from the definition of the green
  dual. For the third statement, we see that
  \[
  \hopfisfrob{green-functor-on-red}
  \]
  where (1.) comes from Corollary \ref{cor:green-to-red-cup-1}. The
  final statement follows from above, as $(\rcounit)\gtrans$ will
  be the unit of $(\rcomult)\gtrans$. Units of monoids are unique, so
  $(\rcounit)\gtrans = \runit$.
\end{proof}

We now have that
$H^{\gtrans}:=(H,\rcomult\!{}\gtrans,\rcounit{}\gtrans,\gmult\!{}\gtrans,\gunit{}\gtrans,\antipode{}\gtrans)$
is a Hopf algebra. By the above Lemma,
$(H,\rmult,\runit,\gcomult,\gcounit)$ is simply $(H^{\gtrans})^\sigma$
when viewed as a bialgebra. Hence, by Proposition
\ref{prop:antipode-props}, to show that this is a Hopf algebra, we
only need to show that $\antipode{}\gtrans$ is invertible, and that it
is equal to $\daggerantipode$. But $(\cdot)\gtrans$ preserves
inverses, so we know that $\antipoded = (\antipode^{\gtrans})^{-1}$.
All that remains is showing that $\antipoded$ has the appropriate
form, and this is done by straightforward calculation.
\[
  \hopfisfrob{green-dual-antipode}
\]
Hence, $\antipoded
=(\antipode^{\gtrans})^{-1}=\daggerantipode$. Therefore, if $H$
fulfills the Frobenius condition, the $H$ admits a Hopf-Frobenius
algebra structure.

\end{proof}

\begin{corollary}\label{cor:green-to-red-cup-2}
  Let $H$ admit a pre-HF algebra structure, such that $(H,\runit,\gcounit)$ is an
  integral Hopf algebra. Then
  \[ \hopfisfrob{green-to-red-cup-2} \]
\end{corollary}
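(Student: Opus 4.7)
The plan is to establish Corollary \ref{cor:green-to-red-cup-2} as a direct diagrammatic companion to Corollary \ref{cor:green-to-red-cup-1}, treating it as the ``other side'' of the same translation between the green and red dual structures. Where Corollary \ref{cor:green-to-red-cup-1} gives one half of the identity $\antipode = \antipodeform$ from Lemma \ref{lemma:iff-integral} (relating one cup/cap pairing to the other via $\antipode$), this corollary should give the companion half, obtained either by bending a wire or by replacing $\antipode$ with $\antipode^{-1}$.

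First I would start from the identity of Corollary \ref{cor:green-to-red-cup-1} and use the snake equations of the green Frobenius algebra (Lemma \ref{lem:frob-self-dual}) to bend the appropriate wire, converting a cup into a cap or vice versa. Second, I would invoke Lemma \ref{theorem:invertible-antipode} to rewrite any $\antipode^{-1}$ that appears as an explicit composite built from the integral $\rint$, the cointegral $\gcoint$, the half-dual triangles, and the green/red multiplications. Third, where sliding the antipode through a multiplication is required, I would appeal to Proposition \ref{prop:antipode-props} which tells us that $\antipode : \Hop \to H$ is a bialgebra homomorphism; this reverses the order of multiplication as wires are pulled through, and it is exactly this which transforms the Corollary \ref{cor:green-to-red-cup-1} identity into its companion.

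The main obstacle I expect is purely bookkeeping rather than conceptual: since the antipode reverses multiplication order when slid across, the wire orientations and the order of the green multiplication/comultiplication on either side of the equation must be tracked carefully. However, all of the ingredients are already in place, so the computation should be short and follow the same pattern as the proof of Corollary \ref{cor:green-to-red-cup-1}. No new lemmas or identities are needed; the corollary is essentially the mirror statement to Corollary \ref{cor:green-to-red-cup-1} made precise by a single diagrammatic manipulation.
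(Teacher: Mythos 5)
Your proposal matches the paper's (implicit) justification: the paper states this corollary without proof, immediately after the computations in Theorem~\ref{thm:iff-frob-condition} establishing $\antipode=\antipodeform$ and the behaviour of the green duality $(\cdot)\gtrans$, and your derivation from Corollary~\ref{cor:green-to-red-cup-1} by snake-equation wire-bending plus Lemma~\ref{theorem:invertible-antipode} and Proposition~\ref{prop:antipode-props} is exactly the intended one-step manipulation. No gap; the approach is essentially the same.
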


\begin{prevlemma}{lem:HF-is-unique}
    Let $H$ admit a Hopf-Frobenius algebra structure. Then this
    structure is unique up to invertible scalar.
\end{prevlemma}
\begin{proof}
  Suppose that $(H,\gmult,\gunit,\rcomult,\rcounit,\antipode)$ admits
  two Hopf-Frobenius structures,
  $(H,\rmult,\runit,\gcomult,\gcounit,\antipoded)$ and
  $(H,\altrmult,\altrunit,\altgcomult,\altgcounit,\overline\antipoded)$.
  Recall that we refer to
  $(H,\gmult,\gunit,\rcomult,\rcounit,\antipode)$ as the green Hopf
  algebra. The Hopf-Frobenius structures share a green Hopf algebra,
  so the respective units and counits of these Hopf algebras must be
  left cointegrals and right integrals of the green Hopf algebra. It
  follows from Lemma \ref{lem:equaliser} that there exists unique
  scalars, $k,l:I \rightarrow I$, such that $\runit = \altrunit \OX k$
  and $\gcounit = \altgcounit \OX l$. Since these are both Hopf
  algebras, we get the following
\[\hopfisfrob{HF-is-unique-proof-1} \]
Thus, $k$ and $l$ are mutually inverse.
We now only need to show that the other structure maps are scalar
multiples of each other. We see that $\overline\antipoded =
\antipoded$ follows from the above proof, as
\[\hopfisfrob{HF-is-unique-proof-3} \]
Finally, this will imply that the multiplication and comultiplication
maps only differ by an invertible scalar. Note that as $\overline\antipoded =
\antipoded$, their inverses will also coincide. Recall how \altgcomult
is constructed, and observe that
\[\hopfisfrob{HF-is-unique-proof-2} \]
where (*) comes from Corollary \ref{cor:green-to-red-cup-2}. The same
is true for \altrmult and \rmult. Hence, if $H$ admits two
Hopf-Frobenius algebra structures, then they will only differ by an
invertible scalar factor.

\end{proof}

\begin{prevlemma}{lemma:rediso}
   The morphism \rediso is a Hopf algebra homomorphism between
  $H_{\rconj}^{\sigma}$ and $H_{\gconj}^*$.
\end{prevlemma}
\begin{proof}
We will only show that \rediso is a homomorphism for $\gcomult\!{}^*$,
the rest of the structure maps will have similar proofs. We first note
that, by Corollary \ref{cor:green-to-red-cup-1}
\[
\inlfropf{rediso-proof-2}
\]
Hence, we see that
\[
\inlfropf{rediso-proof-1}
\]
\end{proof}

\begin{prevlemma}{lemma:rho-with-D-double}
  \[
  \hopfisfrob{rho-with-D-double}
  \]
\end{prevlemma}
\begin{proof}
  This is clear from the definition of \rediso, Lemma \ref{lemma:rediso}
  and Corollary \ref{cor:green-to-red-cup-1}. We explicitly spell out
  the first statement here.
  \[
  \hopfisfrob{rho-with-D-double-proof-1}
  \]
  The proof of the second statement follows immediately from the
  definition of \redinv.
\end{proof}

\section{Taft Hopf algebra for $n=2$}

Here we shall state the Hopf-Frobenius algebra for the 4 dimensional
Taft Hopf algebra explicitly. It is generated by $g$ and $x$, and has
the structure

\begin{table}[h]
\begin{tabular}{l|llllll|lll|lll|l}
$\gmult$ & $1$  & $x$  & $g$   & $gx$ &  & $\rcomult$ &                      &  & $\rcounit$ &     &  & $\antipode$ &      \\ \cline{1-5} \cline{7-8} \cline{10-11} \cline{13-14} 
$1$      & $1$  & $x$  & $g$   & $gx$ &  & $1$        & $1 \otimes 1$        &  & $1$        & $1$ &  & $1$         & $1$  \\
$x$      & $x$  & $0$  & $-gx$ & $0$  &  & $x$        & $1 \OX x + x \OX g$  &  & $x$        & $0$ &  & $x$         & $gx$ \\
$g$      & $g$  & $gx$ & $1$   & $x$  &  & $g$        & $g \OX g$            &  & $g$        & $1$ &  & $g$         & $g$  \\
$gx$     & $gx$ & $0$  & $-x$  & $0$  &  & $gx$       & $g \OX gx+ gx \OX 1$ &  & $gx$       & $0$ &  & $gx$        & $-x$
\end{tabular}
\end{table}

\FVect is a compact closed category, so the integral projection is the
map
\[
\hopfisfrob{sweedler-algebra-Isec}
\]
Hence, the element $x-gx$ is a left cointegral, and the right integral
is the delta function for $x$, $\delta_x$. Hence, by Theorem
\ref{theorem:frob}, these shall be our unit and counit
respectively. It is now possible to construct the resulting
Hopf-Frobenius algebra, but we shall explicitly state the structure
maps. The green Frobenius algebra is
\[
\hopfisfrob{Sweedler-algebra-structure-1}
\]
and the red Frobenius algebra
\[
\hopfisfrob{Sweedler-algebra-structure-2}
\]

\section{Additional Background Material}
\label{sec:addit-backgr-mater}

In this section we provide additional definitions and basic properties
to flesh out the background material of Section~\ref{sec:preliminaries}.

\subsection{Categories with duals}
\label{sec:more-categ-with-duals}

\begin{proposition}\label{prop:duals-are-unique}
  In a monoidal category \catC suppose that $A$ has two right duals
  $(B_1,d_1,e_1)$ and $(B_2,d_2,e_2)$; then there exists an
  isomorphism $f: B_1 \isomorphism B_2$, satisfying the equations
  shown below.
  \[
  \inltf{f-map} \! := \ \inltf{f-defn} 
  \qquad \qquad
  \inltf{duals-are-uniq-2-lhs} = 
  \inltf{duals-are-uniq-2-rhs}  
  \qquad \qquad
  \inltf{duals-are-uniq-1-lhs} = 
  \inltf{duals-are-uniq-1-rhs}  
  \]
  \begin{proof}
    Define $f$ as shown above; the required equations follow immediately.
  \end{proof}
\end{proposition}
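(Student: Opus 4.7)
The plan is to construct the isomorphism $f$ directly as a zigzag composition using the cap of one dual structure and the cup of the other, then to verify the stated equations by pure snake-equation manipulation.

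First I would define $f: B_1 \to B_2$ as indicated by the displayed graphical composite, namely $f := (e_1 \otimes \id_{B_2}) \circ (\id_{B_1} \otimes d_2)$, which bends the $B_1$ wire up to meet $A$ via $e_1$ while producing a fresh $B_2$ wire via $d_2$. Symmetrically, I would write down a candidate inverse $g: B_2 \to B_1$ defined by $g := (e_2 \otimes \id_{B_1}) \circ (\id_{B_2} \otimes d_1)$. The two defining zigzags are mirror images of each other, so any argument that $g \circ f = \id_{B_1}$ will have a dual argument showing $f \circ g = \id_{B_2}$.

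Next I would verify $g \circ f = \id_{B_1}$. Drawing the composite produces a diagram with four occurrences of caps/cups: one $d_2$, one $e_2$, one $d_1$, one $e_1$. Using naturality of the monoidal structure (the interchange law) one can slide the $d_2$ and $e_2$ next to each other to form a snake on $B_2$, which straightens by the snake equation for $(B_2, d_2, e_2)$. What remains is a snake on $B_1$ built from $d_1$ and $e_1$, which straightens by the snake equation for $(B_1, d_1, e_1)$. This leaves $\id_{B_1}$. The reverse composition is handled identically with the roles of the two duals swapped, establishing that $f$ is an isomorphism.

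Finally, the two compatibility equations in the statement — expressing $d_2$ and $e_1$ in terms of $f$ together with $d_1$ and $e_2$ respectively — follow immediately by plugging the definition of $f$ into the right-hand sides and collapsing one snake on $B_1$ or $A$. There is no genuine obstacle here: the whole argument is a sequence of applications of the snake equations for the two dual structures, and the only care needed is to keep track of which snake is being straightened at each step. This is exactly the kind of manipulation that the graphical calculus makes transparent, so the proof reduces to presenting the two or three relevant pictures.
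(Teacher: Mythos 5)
Your proposal is correct and follows exactly the route the paper intends: the paper's proof is the one-liner ``define $f$ as shown; the equations follow immediately,'' and you have simply spelled out the standard snake-equation verification of invertibility and of the two compatibility equations. The only nitpick is that the snake you collapse first in $g\circ f$ is the one whose straightened wire is $A$ (the other of the two equations for the dual pair $(B_2,d_2,e_2)$), not a snake on $B_2$ itself, but this does not affect the argument.
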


\subsection{Monoids and Comonoids}
\label{sec:monoids-comonoids}

\begin{definition}\label{def:monoid-comonoid}
  A \emph{monoid} in a monoidal category \catC consists of an object $M$,
  a binary multiplication $\mu : M \otimes M \to M$ and a unit
  morphism $\eta : I \to M$ obeying the familiar associativity and
  unit laws, shown in diagram form below.
  \[
  \inltf{green-assoc-lhs} = \inltf{green-assoc-rhs}
  \qquad\qquad
  \inltf{green-unit-left} = \inltf{identity_1} =
  \inltf{green-unit-right} 
  \]
  A \emph{comonoid} in \catC is a monoid in $\catC\op$, concretely
  depicted below.
  \[
  \inltf{green-coassoc-lhs} = \inltf{green-coassoc-rhs}
  \qquad\qquad
  \inltf{green-counit-left} = \inltf{identity_1} =
  \inltf{green-counit-right} 
  \]
  A (co)monoid is called (co)commutative if its (co)multiplication is
  invariant under the exchange map, as depicted below.
  \[
  \inltf{green-swap-mult} =   \inltf{green-mult}
  \qquad\qquad
  \inltf{green-swap-comult} =   \inltf{green-comult}
  \]
\end{definition}

In this paper we will \emph{not} assume commutativity or
cocommutativity.  

\begin{definition}\label{def:invertible-element}
  Given a monoid $(M,\gmult,\gunit)$, a point $a : I \to M$ is
  \emph{left invertible} if there exists a point $l:I\to M$ satisfying
  the left equation below; it is \emph{right invertible} if there
  exists $r:I\to M$ satisfying the right equation; it is
  \emph{invertible} if it is both left and right invertible, in which
  case the two inverses coincide.
  \[
  \inltf{green-left-inverse} = 
  \inltf{green-unit} =
  \inltf{green-right-inverse}
  \]
  Co-invertibility of co-points $\alpha:M\to I$ with respect to a
  comonoid is defined dually.
\end{definition}

\REM{Add here comment about point copoint element terminology}

\subsection{Frobenius algebras}
\label{sec:more-frobenius-algebras}

In the following lemmas we will assume that
we have a given Frobenius algebra
$(F,\gmult,\gunit,\gcomult,\gcounit)$.

\begin{definition}\label{def:symmetric-frob}
  A Frobenius algebra is called \emph{symmetric} if its cap (or
  equivalently its cup) is invariant under the symmetry.
  \[
  \inltf{green-cap-swap} = \inltf{green-cap} \qquad\qquad\qquad
  \inltf{green-cup-swap} = \inltf{green-cup}
  \]
\end{definition}
\begin{proof}
  See Kock \cite{Kock:TQFT:2003}.
\end{proof}

\begin{lemma}\label{lem:invertible-coinvertible-bij}
  There is a bijective correspondence between invertible points for
  the monoid and coinvertible copoints for the comonoid.
  \begin{proof}
    Let $(\cdot)\gtrans$ be the duality induced by the cup and cap;
    then $u:I\to F$ is invertible iff and only if $u\gtrans : F \to I$
    is coinvertible.
  \end{proof}
\end{lemma}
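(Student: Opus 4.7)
The plan is to exhibit the bijection concretely using the self-duality of $F$ furnished by Lemma~\ref{lem:frob-self-dual}, and then verify that it carries invertibility on the monoid side to coinvertibility on the comonoid side. Concretely, let $e : F \otimes F \to I$ and $d : I \to F \otimes F$ be the cup and cap induced by the Frobenius algebra. For a point $u : I \to F$, define $u\gtrans := e \circ (1_F \otimes u) : F \to I$, and for a copoint $\alpha : F \to I$, define $\alpha\gtrans := (1_F \otimes \alpha) \circ d : I \to F$. The snake equations of Lemma~\ref{lem:frob-self-dual} immediately give $(u\gtrans)\gtrans = u$ and $(\alpha\gtrans)\gtrans = \alpha$, so $(\cdot)\gtrans$ is an involution and, in particular, a bijection between $\mathrm{Hom}(I,F)$ and $\mathrm{Hom}(F,I)$.

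Next I would show that this bijection intertwines the monoid and comonoid convolution products in the appropriate sense. The essential identity, which is a straightforward consequence of the Frobenius law together with the definitions of $d$ and $e$ from $(\gmult,\gunit,\gcomult,\gcounit)$, is that for any pair of points $u, v : I \to F$,
\[
(\gmult \circ (u \otimes v))\gtrans = (u\gtrans \otimes v\gtrans) \circ \gcomult
\qquad\text{and}\qquad
\gunit\gtrans = \gcounit.
\]
These two equations are precisely what makes $(\cdot)\gtrans$ a \emph{monoid-to-comonoid} transfer at the level of elements and identity.

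Given these, the equivalence of invertibility and coinvertibility is immediate. If $l$ is a left inverse of $u$, meaning $\gmult \circ (l \otimes u) = \gunit$, dualising both sides using the two identities above yields $(l\gtrans \otimes u\gtrans) \circ \gcomult = \gcounit$, so $l\gtrans$ is a left coinverse of $u\gtrans$. The same argument runs in reverse using $(\cdot)\gtrans \circ (\cdot)\gtrans = \mathrm{id}$, and the right-handed versions are symmetric. Thus $u$ is (left/right) invertible if and only if $u\gtrans$ is (left/right) coinvertible, and the bijection is established.

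The only real work is verifying the two displayed identities above. This is routine diagrammatic manipulation using the Frobenius law and the explicit form of $d = \gcomult \circ \gunit$ and $e = \gcounit \circ \gmult$, so the main obstacle is purely bookkeeping rather than conceptual; no specialness of the Frobenius algebra is required, since invertibility is preserved up to the exchange of sides rather than requiring a normalised factor.
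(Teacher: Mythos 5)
Your proposal takes the same route as the paper's (very terse) proof: transport points to copoints along the self-duality of Lemma~\ref{lem:frob-self-dual} and check that invertibility corresponds to coinvertibility. The bijection itself, via the snake equations, is fine, as is $\gunit\gtrans=\gcounit$. The problem is your key displayed identity: for a Frobenius algebra that is neither commutative nor symmetric --- and the paper explicitly declines to assume either --- the equation
\[
(\gmult \circ (u \otimes v))\gtrans \;=\; (u\gtrans \otimes v\gtrans)\circ\gcomult
\]
is false. Writing $u\gtrans(w)=\gcounit(wu)$ and $d=\sum_i a_i\otimes b_i$ for the cap, the Frobenius law gives $\gcomult(w)=\sum_i wa_i\otimes b_i$, and the snake equation $\sum_i a_i\,\gcounit(b_i v)=v$ collapses the right-hand side to $w\mapsto\gcounit(wvu)$, whereas the left-hand side is $w\mapsto\gcounit(wuv)$. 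The duality is a monoid \emph{anti}homomorphism on points, consistent with the coherence $(A\otimes B)^*\cong B^*\otimes A^*$ and with Lemma~\ref{lemma:how-green-relates-to-dagger}, where $(\rcomult)\gtrans$ is the multiplication with its arguments swapped. The correct identity is $(\gmult\circ(u\otimes v))\gtrans = (v\gtrans\otimes u\gtrans)\circ\gcomult$.

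This is a repairable slip rather than a fatal gap. With the corrected identity, a left inverse of $u$ dualises to a right coinverse of $u\gtrans$ and vice versa, so your parenthetical claim that the correspondence holds chirally (``left/right invertible iff left/right coinvertible'') is wrong: handedness is exchanged, not preserved, and ``the right-handed versions are symmetric'' hides exactly this point. Since invertibility and coinvertibility each demand both a left and a right (co)inverse (Definition~\ref{def:invertible-element}), the two-sided statement --- which is all the lemma asserts --- survives, and your bijection restricts as required. Your closing remark about specialness is correct but the reason given is beside the point; specialness simply never enters the argument.
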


\begin{lemma}\label{lem:coinvertible-elts-give-frobs}
  Let $u$ be a coinvertible element of the comonoid.  Define 
  \[
  \beta(u) := \inlfropf{other-frob-structure}
  \qquad  \qquad  \qquad
  \bar\beta(u) := \inlfropf{other-frob-cap}
  \]
  Then $\beta(u)$ is a Frobenius form for the monoid $(F,\gmult,\gunit,)$.
\begin{proof}
  We must show that the equations of \ref{def:frob-alt} hold.  The
  first follows from associativity of the monoid.  For the second we
  have:
  \[
  \inltf{alt-frob-snake-left} =
  \inltf{alt-frob-snake-left-ii} =
  \inltf{alt-frob-snake-left-iii} =
  \inltf{alt-frob-snake-left-iv} = \ \
  \inltf{identity_1}
  \]
  and similarly for the other side.  Note that $\beta(u) = \beta(v)$
  implies $u = v$ by the  uniqueness of inverses.
\end{proof} 
\end{lemma}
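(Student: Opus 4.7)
The plan is to verify the two conditions of Definition~\ref{def:frob-alt} for $\beta(u)$: the associativity condition relating $\beta(u)$ to the multiplication, and the non-degeneracy condition exhibiting $\bar\beta(u)$ as a two-sided inverse of $\beta(u)$ via the snake equations. Reading the diagram names, $\beta(u)$ is the standard Frobenius form $\inltf{green-cup}$ with the counit replaced by $u$, while $\bar\beta(u)$ is its candidate inverse built from the comultiplication together with the coinverse $u^{-1}$.

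The associativity condition is essentially free. Since $\beta(u)$ factors through the monoid multiplication with $u$ applied at the end, the equation
\[
\beta(u) \circ (\mu \otimes \id{F}) = \beta(u) \circ (\id{F} \otimes \mu)
\]
collapses to the associativity of the underlying monoid, with $u$ playing no structural role.

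The non-degeneracy condition is the substantive part. I would proceed by a three-step diagram chase. First, unfold $\beta(u)$ and $\bar\beta(u)$ in terms of the multiplication, comultiplication, $u$ and $u^{-1}$. Second, apply the Frobenius law from Definition~\ref{def:frobenius-alg} to slide the multiplication belonging to $\beta(u)$ past the comultiplication belonging to $\bar\beta(u)$; the intent of this manoeuvre is to arrange $u$ and $u^{-1}$ so that they meet at a single comultiplication node. Third, invoke the coinvertibility of $u$ (Definition~\ref{def:invertible-element}), which collapses the $u \otimes u^{-1}$ pattern composed with the comultiplication down to the counit $\gcounit$. What remains is precisely the snake equation for the cup and cap induced by the original Frobenius structure, which holds by Lemma~\ref{lem:frob-self-dual}. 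The mirror side of the snake follows by running the same three moves reflected.

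The main obstacle is choreographing the Frobenius-law rewrite so that the $u$ and $u^{-1}$ factors end up in exactly the configuration the coinverse identity requires; once that alignment is achieved, the rest is bookkeeping. The concluding remark that $\beta(u) = \beta(v)$ forces $u = v$ is a consequence of injectivity of the map $u \mapsto \beta(u)$: the construction above can be reversed, and uniqueness of convolution inverses in $\mathrm{Hom}(F, I)$ pins down $u$ from $\beta(u)$.
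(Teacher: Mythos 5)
Your proposal is correct and follows essentially the same route as the paper: the associativity half is immediate from the monoid, and the snake equation is verified by the same diagram chase (unfold, apply the Frobenius law to slide $\gmult$ past $\gcomult$, collapse $u$ against $u^{-1}$ via coinvertibility, and finish with the original structure's snake/counit laws). The only cosmetic difference is in the closing injectivity remark, where the paper appeals directly to uniqueness of (co)inverses rather than a convolution-algebra argument, but the substance is the same.
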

\begin{lemma}\label{lem:frobs-give-coinvertible-elts}
  Suppose that $\beta$ is a Frobenius form on
  \gmult\!;  then we obtain a coinvertible element $u:F\to I$ as follows:
  \[
  u := \inltf{alt-frob-coinvertible}
  \qquad  \qquad  \qquad
  u^{-1} := \inltf{alt-frob-coinverse}
  \]
  \begin{proof}
    We need only to show that $u^{-1}$ is the coinverse of $u$.
    \[
    \inltf{alt-frob-coinverse-pf-i} \ = \ 
    \inltf{alt-frob-coinverse-pf-ii} \ = \ 
    \inltf{alt-frob-coinverse-pf-iii} \ = \ 
    \inltf{alt-frob-coinverse-pf-iv} \ = \ 
    \inltf{alt-frob-coinverse-pf-v} \ = \ \ 
    \inltf{green-counit}
    \]
  \end{proof}
\end{lemma}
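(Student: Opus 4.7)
The plan is to verify directly that the $u^{-1}$ given in \texttt{alt-frob-coinverse} is the coinverse of the $u$ given in \texttt{alt-frob-coinvertible} with respect to the comonoid $(\gcomult,\gcounit)$ induced on $F$ by the Frobenius form $\beta$. Since a coinverse, when it exists, is automatically unique, it suffices to check the two defining equations of Definition~\ref{def:invertible-element} (dualised to copoints).

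First I would unfold all the data into a common vocabulary. By Definition~\ref{def:frob-alt}, the comonoid $(\gcomult,\gcounit)$ arising from $\beta$ is obtained by bending $\gmult$ through $\bar\beta$, with $\gcounit$ given by pairing $\gunit$ against $\beta$. The copoint $u$ is built from $\beta$ and $\gunit$, while $u^{-1}$ is built from $\bar\beta$ together with $\gmult$. Substituting these definitions into the composite $(u \otimes u^{-1}) \circ \gcomult$ produces a diagram expressed solely in terms of $\gmult$, $\gunit$, $\beta$ and $\bar\beta$.

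Second, I would reduce this diagram to $\gcounit$ by a sequence of elementary moves: repeated use of the Frobenius-form identity $\beta \circ (\gmult \otimes 1) = \beta \circ (1 \otimes \gmult)$ from Definition~\ref{def:frob-alt} (which allows the extra copy of $\gmult$ inside $u^{-1}$ to be absorbed), one application of the snake equation to cancel $\bar\beta$ against $\beta$, and finally the unit and associativity laws of $\gmult$. The mirror argument, applied to the other side, gives $(u^{-1}\otimes u)\circ\gcomult = \gcounit$. In spirit this is exactly dual to the calculation already performed in Lemma~\ref{lem:coinvertible-elts-give-frobs}, read in the opposite direction.

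The only place I expect to have to be a little careful is orientation: the Frobenius-form axiom is two-sided but not fully symmetric, so the reductions for the left-coinverse equation and the right-coinverse equation are not literally identical — one of them will need an auxiliary snake move to swap which leg of $\bar\beta$ meets which leg of $\beta$ before the cancellation goes through. Once that bookkeeping is sorted, the computation is a purely routine diagrammatic rewrite.
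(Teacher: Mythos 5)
Your overall strategy --- unfold the definitions of $u$ and $u^{-1}$ and verify the two coinverse equations by a routine diagrammatic rewrite using associativity of the form, the snake equations for $\beta$ and $\bar\beta$, and the monoid laws --- is exactly the shape of the paper's proof. But there is a genuine error in your setup: you are verifying coinvertibility against the wrong comonoid. The subsection in which this lemma sits fixes, once and for all, an ambient Frobenius algebra $(F,\gmult,\gunit,\gcomult,\gcounit)$, and $\beta$ is an \emph{arbitrary} Frobenius form on the monoid part, in general different from the form $\gcounit\circ\gmult$ of that ambient structure. The coinvertibility asserted in the statement is with respect to the \emph{fixed} comonoid $(\gcomult,\gcounit)$; this is forced by the way the lemma is chained with Lemma~\ref{lem:invertible-coinvertible-bij} and Lemma~\ref{lem:coinvertible-elts-give-frobs} to yield the bijection of Proposition~\ref{prop:frob-structs-are-invertible-elts}, and it is why the paper's chain of equalities terminates in the fixed counit $\gcounit$ rather than in anything built from $\beta$.

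With your reading the claim degenerates. By your own description, $u$ is ``$\gunit$ paired against $\beta$'', which is precisely the counit $\epsilon_\beta$ of the comonoid induced by $\beta$ via Definition~\ref{def:frob-alt}; the counit law then forces any copoint $v$ with $(u\otimes v)\circ\delta_\beta=\epsilon_\beta$ to equal $u$ itself, so your computation would either fail to close or would ``prove'' $u^{-1}=u$, which is false for a general form. Relatedly, your claim that the composite can be expressed solely in terms of $\gmult$, $\gunit$, $\beta$ and $\bar\beta$ is a symptom of the same conflation: the fixed $\gcomult$ and $\gcounit$ are independent data that must appear in the verification, and the rewrite has to use the Frobenius law (equivalently the cup and cap) of the \emph{ambient} algebra to trade $\gcomult$ against $\gmult$ before the $\beta$--$\bar\beta$ snake can be cancelled. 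Once you re-target the computation at the fixed comonoid, your plan goes through and coincides with the paper's argument.
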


\noindent
Combining the three preceding lemmas we obtain:

\begin{proposition}
  \label{prop:frob-structs-are-invertible-elts}
  There is a bijective correspondence between the invertible elements
  of a monoid and the Frobenius forms definable on it.
\end{proposition}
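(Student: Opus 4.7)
The plan is to chain together the three bijections established in Lemmas \ref{lem:invertible-coinvertible-bij}, \ref{lem:coinvertible-elts-give-frobs}, and \ref{lem:frobs-give-coinvertible-elts}. Lemma \ref{lem:invertible-coinvertible-bij} supplies a bijection between invertible points $u : I \to F$ of the monoid and coinvertible copoints of the comonoid, mediated by the self-duality induced by the Frobenius cup and cap. Lemma \ref{lem:coinvertible-elts-give-frobs} sends a coinvertible copoint $u$ to a Frobenius form $\beta(u)$; its proof already notes that this assignment is injective, since the inverse of $u$ is uniquely determined by $\beta(u)$. Lemma \ref{lem:frobs-give-coinvertible-elts} runs the other way, sending a Frobenius form $\beta$ to a coinvertible copoint, which I denote $u(\beta)$.

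To obtain the proposition, I would assemble these into a bijection between coinvertible copoints and Frobenius forms, then precompose with the bijection of Lemma \ref{lem:invertible-coinvertible-bij} to reach invertible points of the monoid. Concretely, one must verify that the two assignments $u \mapsto \beta(u)$ and $\beta \mapsto u(\beta)$ are mutually inverse. In one direction, starting from a coinvertible $u$, the composite $u(\beta(u))$ unfolds to a diagram that collapses back to $u$ using the snake equations for the Frobenius cup and cap together with the unit law of the monoid. In the other direction, starting from a Frobenius form $\beta$, the composite $\beta(u(\beta))$ reduces to $\beta$ by associativity and the same snake manipulations.

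The main obstacle, if one can call it that, is purely calculational: the two round-trip verifications above. Both are diagrammatic manipulations in the same style as the proofs of Lemmas \ref{lem:coinvertible-elts-give-frobs} and \ref{lem:frobs-give-coinvertible-elts}, relying on no ingredients beyond the defining equations of a Frobenius algebra and the uniqueness of (co)inverses. Once these round-trips are in hand, the composition with Lemma \ref{lem:invertible-coinvertible-bij} is bookkeeping and yields the claimed correspondence between invertible elements of $(F,\gmult,\gunit)$ and Frobenius forms definable on it.
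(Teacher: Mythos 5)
Your proposal is correct and matches the paper's approach exactly: the paper likewise obtains the proposition by combining Lemmas~\ref{lem:invertible-coinvertible-bij}, \ref{lem:coinvertible-elts-give-frobs}, and \ref{lem:frobs-give-coinvertible-elts}, with no further argument given. You are in fact slightly more careful than the paper in flagging the two round-trip verifications (which reduce to the unit law and the associativity equation for the Frobenius form), but this is the same decomposition.
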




\end{document}